\newcommand{\e}{\mathrm{E}}
\newcommand{\mcal}[1]{\mathcal{#1}}
\newcommand{\dx}{\mathrm{d}}
\newcommand{\abs}[1]{\left\lvert #1\right\rvert}
\newcommand{\lone}{\mathbf{L}^{1}}
\newcommand{\rr}{\mathbb{R}}
\renewcommand{\mcal}[1]{\mathcal{#1}}
\newcommand{\pp}{\varepsilon}
\newcommand{\flr}[1]{\lfloor #1 \rfloor}
\newcommand{\comment}[1]{}
\begin{document}

\theoremstyle{plain}
\newtheorem{thm}{Theorem}
\newtheorem{lem}[thm]{Lemma}
\newtheorem{prop}[thm]{Proposition}
\newtheorem{cor}[thm]{Corollary}

\theoremstyle{definition}
\newtheorem{defn}{Definition}
\newtheorem{asmp}{Assumption}
\newtheorem{notn}{Notation}
\newtheorem{prb}{Problem}

\theoremstyle{remark}
\newtheorem{rmk}{Remark}
\newtheorem{exm}{Example}
\newtheorem{clm}{Claim}

\title[Rank dependent Brownian motions]{A phase transition behavior for Brownian motions interacting through their ranks}

\author{Sourav Chatterjee}
\address{367 Evans Hall \# 3860\\
Univ. of California at Berkeley\\
Berkeley, CA 94720-3860}
\email{sourav@stat.berkeley.edu}

\author{Soumik Pal}
\address{506 Malott Hall\\ Cornell University\\ Ithaca, NY 14853}
\email{soumik@math.cornell.edu}

\keywords{Interacting diffusions, Poisson-Dirichlet, Atlas model, rank-dependent processes, McKean-Vlasov, phase transition.}

\subjclass[2000]{60G07, 91B28, 60G55, 60K35}

\thanks{Sourav Chatterjee's research is partially supported by N.S.F. grant DMS-0707054.}
\thanks{Soumik Pal's research is partially supported by N.S.F. grant DMS-0306194 to the probability group at Cornell.}


\date{\today}
\maketitle

\begin{abstract}
Consider a time-varying collection of $n$ points on the positive real axis, modeled as Exponentials of $n$ Brownian motions whose drift vector at every time point is determined by the relative ranks of the coordinate processes at that time. If at each time point we divide the points by their sum, under suitable assumptions the rescaled point process converges to a stationary distribution (depending on $n$ and the vector of drifts) as time goes to infinity. This stationary distribution can be exactly computed using a recent result of Pal and Pitman. The model and the rescaled point process are both central objects of study in models of equity markets introduced by Banner, Fernholz, and Karatzas. In this paper, we look at the behavior of this point process under the stationary measure as $n$ tends to infinity. Under a certain `continuity at the edge' condition on the drifts, we show that one of the following must happen: either (i) all points converge to $0$, or (ii) the maximum goes to $1$ and the rest go to $0$, or (iii) the processes converge in law to a non-trivial Poisson-Dirichlet distribution. The underlying idea of the proof is inspired by Talagrand's analysis of the low temperature phase of Derrida's Random Energy Model of spin glasses. The main result establishes a universality property for the BFK models and aids in explicit asymptotic computations using known results about the Poisson-Dirichlet law.

\end{abstract}


\section{Introduction} 

Let $\delta_1,\delta_2,\ldots,\delta_n$ be $n$ real constants, and consider the following system of stochastic differential equations: 
\begin{equation}\label{ranksde}
dX_i(t) = \sum_{j=1}^n \delta_j 1\left(X_i(t)=X_{(j)}(t)\right)dt + dW_i(t),\quad i=1,2,\ldots,n.
\end{equation}
Here $X_{(1)}(t) \ge X_{(2)}(t) \ge \ldots \ge X_{(n)}(t)$ are the coordinates of the process in the decreasing order, and $W=(W_1,W_2,\ldots,W_n)$ is an $n$-dimensional Brownian motion. The SDE models the movement of $n$ particles as interacting Brownian motions such that at every time point, if we order the positions of the particles, then the $i$th ranked particle from the top gets a drift $\delta_i$ for $i=1,2,\ldots,n$. As time evolves, the Brownian motions switch ranks and drifts, and hence their motion is determined by such time dependent interactions. Such processes (or close relatives) have been considered recently in several articles. We consider three major veins in the literature where these processes make their appearance.
\medskip

\noindent \textbf{1. Stochastic Portfolio Theory.} A detailed study of the solutions of SDE \eqref{ranksde} in general was taken up in a paper by Banner, Fernholz, and Karatzas (BFK) \cite{atlasmodel}. These authors actually consider a more general class of SDEs than \eqref{ranksde} in which the drifts as well as the volatilities of the different Brownian motions depend on their ranks. Using a delicate and elegant analysis of the local time of intersections of the different Brownian motions, the authors were able to demonstrate various ergodic properties of such processes. An important object in their study is the vector of \emph{spacings} in the ordered process:
\begin{equation}\label{spacings}
Y_i(t) := X_{(i)}(t) - X_{(i+1)}(t), \quad 1\le i \le n-1.
\end{equation} 
Under suitable sufficient conditions the authors proved that the marginal distributions of the spacings converge to Exponential laws and left the question of joint convergence open. The joint law was identified and the BFK conditions were shown to be necessary and sufficient in Pal and Pitman \cite{palpitman} (see Theorem 
\ref{theoremN} below for the statement of the Pal-Pitman result). \medskip

\noindent{\bf 2. Burgers', McKean-Vlasov, and the granular media equations.} Sznitman in his articles on the propagation of chaos, \cite{sznitman86} and \cite{sznitman91}, considers $n$ Brownian motions arranged in decreasing order. He proves that the ordered process can be thought of as a multidimensional Brownian motion reflected in the orthant $\{ x_1 \ge x_2\ldots \ge x_n\}$ and discusses its relation with the solution of the Burgers' equation. For more details on reflected Brownian motions in an orthant, see Varadhan and Williams \cite{varwilliams} and Williams \cite{williams87r}. For more on stochastic particle system methods in analyzing Burgers' equation, see Calderoni and Pulvirenti \cite{calpul}. Note that the drifts in SDE \eqref{ranksde} at any time point are a function of the empirical cumulative distribution function induced by the $n$ Brownian particles on the line. In stochastic models related to McKean-Vlasov and the granular media PDE, the empirical CDF, for large $n$, is an approximation to a single probability measure (depending on t). Thus,  \eqref{ranksde} and similar systems often appear as approximations to SDEs whose drifts and volatilities depend on the marginal distribution of the particles. For example, in the context of McKean-Vlasov type equations, see Bossy and Talay \cite{bostal} and the recent article by Bolley, Guillin, and Villani \cite{bolguivil}. For the granular media equation, see the article by Cattiaux, Guillin, and Malrieu \cite{catgmal} which also lucidly describes the physics behind the modeling.  Also see Jourdain \cite{jourdain00} and Malrieu \cite{malrieu01} for an integrated approach to the McKean-Vlasov and granular media differential equations and related interacting diffusions. From similar perspectives Jourdain and Malrieu \cite{joumal} considers SDE \eqref{ranksde} with an increasing sequence of $\delta_i$'s and establishes joint convergence of the spacing system \eqref{spacings} to independent Exponentials as $t$ goes to infinity. 
\medskip

\noindent{\bf 3. Interacting particle systems and queueing networks.} The ordered particle system of Brownian motions had been studied even earlier by Harris in \cite{harris65}. Harris considers a countable collection of ordered Brownian motions with no drifts (i.e. $\delta_i=0$), and analyzes invariant distributions for the spacings between the ordered processes. Arratia in \cite{arratia85} (see also \cite{arratia83}) observes that instead of Brownian motion, if one considers the exclusion process associated with the nearest neighbor random walk on $\mathbb{Z}$, then the corresponding stationary system of spacings between particles can be interpreted as a finite or infinite series of queues. We direct the reader to the seminal articles by Harrison and his coauthors for a background on systems of Brownian queues: \cite{harrison73}, \cite{harrison00}, \cite{harvan97}, \cite{harwil87s}, and \cite{harrison78}. Baryshnikov \cite{barysh01} establishes the connections between Brownian queues and GUE random matrices. For more references in this direction, see Pal and Pitman \cite{palpitman}. Also, see the related discrete time models in statistical physics studied by Ruzmaikina and Aizenman \cite{ruzaizenman}, and Arguin and Aizenmann \cite{arguinaizenman}. In particular see the article by Arguin \cite{arguinPD} where he connects the Poisson-Dirichlet's with the competing particle models.
\medskip
 
The article by Pal and Pitman \cite{palpitman} considers solutions of SDE \eqref{ranksde} as their central object of study. The number $n$ can either be finite or countably infinite. They show the existence of a weak solution for SDE \eqref{ranksde} and the uniqueness in law (see Lemma 3 in \cite{palpitman}). One of the major results in the paper  is a necessary and sufficient condition on the sequence $\{\delta_1,\ldots,\delta_n\}$ such that the law of the spacing vector $(Y_1(t),Y_2(t),\ldots,Y_{n-1}(t))$ converges in total variation to a unique stationary distribution as time goes to infinity. We would like to caution that \cite{palpitman} uses the order statistic notation in which $X_{(1)} \le X_{(2)} \le \ldots$ while describing SDE \eqref{ranksde}, which is the reverse of the notation we found to be the most suitable for this paper. We state part of the result below following our notation in \eqref{ranksde}.

\begin{thm}[Pal and Pitman \cite{palpitman}, Theorem 4]\label{theoremN} 
Consider the SDE in \eqref{ranksde}. For $1 \le k \le n$, let
\begin{equation}\label{alphak}
\alpha_k:= \sum_{i = 1}^k ( \bar{\delta}_n - \delta_i   )  
\end{equation}
where $\bar{\delta}_n$ is the average drift $(\delta_1+\delta_2+\ldots+\delta_n)/n$. 

For each fixed initial distribution of the $n$ particle system with
drifts $\delta_i$, the collection of laws of 
$X_{(1)}(t) - X_{(n)}(t)$ for $t \ge 0$ is tight if
and only if 
\begin{equation}\label{conal}
\alpha _k > 0  \mbox{ for all } 1 \le k \le n-1,
\end{equation}
in which case the following result holds:

The distribution of the spacings system 
\[
\left(Y_j(t)=X_{(j)}(t) - X_{(j+1)}(t),\quad1\le j\le n-1\right)
\]
at time $t$ converges in total variation norm as $t \rightarrow \infty$ to a unique stationary distribution which is that of independent Exponential variables $Y_j$ with rates $2\alpha_j$, $1 \le j \le n-1$.
Moreover, the spacings system is reversible at equilibrium.
\end{thm}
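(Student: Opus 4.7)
My approach is to identify the spacings vector $(Y_1,\ldots,Y_{n-1})$ as a semimartingale reflecting Brownian motion (SRBM) in the orthant $\mathbb{R}_+^{n-1}$ and then invoke the Harrison--Williams theory of product-form stationary distributions.

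First, by a Tanaka-type argument in the spirit of Sznitman \cite{sznitman86}, one derives
\begin{equation*}
dX_{(i)}(t) = \delta_i\,dt + d\widetilde{W}_i(t) - \tfrac{1}{2} dL_{i-1}(t) + \tfrac{1}{2} dL_i(t),\quad 1 \le i \le n,
\end{equation*}
where $(\widetilde{W}_i)$ are independent standard Brownian motions, $L_j$ is the semimartingale local time of $Y_j := X_{(j)} - X_{(j+1)}$ at zero, and $L_0 \equiv L_n \equiv 0$. Differencing consecutive equations gives
\begin{equation*}
dY_j(t) = (\delta_j - \delta_{j+1})\,dt + dB_j(t) + dL_j(t) - \tfrac{1}{2} dL_{j-1}(t) - \tfrac{1}{2} dL_{j+1}(t),
\end{equation*}
with $B_j = \widetilde{W}_j - \widetilde{W}_{j+1}$. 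Thus $(Y_j)$ is an SRBM in $\mathbb{R}_+^{n-1}$ with drift $\mu_j = \delta_j - \delta_{j+1}$, tridiagonal covariance matrix $\Sigma$ (diagonal $2$, off-diagonals $-1$), and tridiagonal reflection matrix $R$ (diagonal $1$, off-diagonals $-\tfrac{1}{2}$).

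Next I would verify the Harrison--Williams skew-symmetry condition $2\Sigma = RD + DR^T$: since $R$ is symmetric and $\Sigma = 2R$, the choice $D = 2I$ gives $RD + DR^T = 4R = 2\Sigma$. Consequently, when the SRBM is positive recurrent, its invariant law is a product of independent exponentials $\prod_j \lambda_j e^{-\lambda_j y_j}$. To identify the rates, applying the generator to each coordinate function $y_k$ and invoking the stationary balance yields the tridiagonal system
\begin{equation*}
\mu_j + \ell_j - \tfrac{1}{2}\ell_{j-1} - \tfrac{1}{2}\ell_{j+1} = 0, \quad 1 \le j \le n-1,
\end{equation*}
with $\ell_0 = \ell_n = 0$, where $\ell_j := \lim_{t\to\infty} E[L_j(t)]/t$. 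Using the identity $\alpha_j - \alpha_{j-1} = \bar{\delta}_n - \delta_j$, telescoping gives the unique solution $\ell_j = 2\alpha_j$. The product-form local-time formula $\lambda_j = 2\ell_j/\Sigma_{jj}$ (which in one dimension is the elementary relation between the rate of an exponential and the boundary local-time rate of a reflecting diffusion) then forces $\lambda_j = 2\alpha_j$.

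For the tightness characterization, existence of the product exponential as a genuine probability measure requires $2\alpha_j > 0$ for each $j$, yielding the necessity of condition \eqref{conal}. Conversely, if $\alpha_k \le 0$ for some $k$, summing the ordered-process SDE over $i \le k$ and telescoping the local times shows that
\begin{equation*}
S_k(t) := \sum_{i=1}^k X_{(i)}(t) - \tfrac{k}{n}\sum_{i=1}^n X_{(i)}(t)
\end{equation*}
satisfies $dS_k = -\alpha_k\,dt + dM(t) + \tfrac{1}{2} dL_k(t)$ with $M$ a true martingale and $L_k$ non-decreasing; since $-\alpha_k \ge 0$, $S_k$ grows without bound, and the elementary bound $S_k \le \tfrac{k(n-k)}{n}\bigl(X_{(1)} - X_{(n)}\bigr)$ forces the range out of tightness. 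Convergence in total variation to the product-exponential law then follows from standard ergodic theory for positive recurrent SRBM, and reversibility at equilibrium is immediate from the symmetry of $R$ and $\Sigma$ together with the product form. The main technical obstacle lies in the first step: rigorously justifying the ordered SDE, in particular the independence of the driving Brownian motions $\widetilde{W}_i$, which rests on a delicate Tanaka-symmetrization argument and a verification that triple collisions almost surely do not occur---points carefully handled in \cite{atlasmodel} and \cite{sznitman86}.
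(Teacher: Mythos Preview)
The present paper does not prove Theorem~\ref{theoremN} at all: it is quoted verbatim from Pal and Pitman~\cite{palpitman} (their Theorem~4) and used as a black box throughout. So there is no ``paper's own proof'' to compare against here.

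That said, your proposal is essentially the argument that Pal and Pitman themselves give. They too identify the spacings process as a reflected Brownian motion in the orthant $\mathbb{R}_+^{n-1}$ with the tridiagonal covariance and reflection data you wrote down, observe that $\Sigma=2R$ so that the Harrison--Williams skew-symmetry condition holds, and read off the product-of-exponentials stationary law with rates $2\alpha_j$. Your necessity argument via the centred partial sum $S_k$ and the elementary bound $S_k\le \tfrac{k(n-k)}{n}(X_{(1)}-X_{(n)})$ is also in the same spirit as the ``localization'' heuristic the present paper sketches in the remark following the theorem. Two small comments: first, the formula you invoke as ``$\lambda_j=2\ell_j/\Sigma_{jj}$'' is correct here but is not the most transparent route---it is cleaner to solve $R^T\gamma = -D^{-1}\Sigma\,\mu$ directly for the exponential rates, which collapses immediately because $D^{-1}\Sigma=R$; second, in the borderline case $\alpha_k=0$ your submartingale argument needs a word more (the drift vanishes, so you rely on $\limsup M(t)=\infty$ for the Brownian part together with the nonnegativity of $L_k$), but this is routine.
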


\noindent{\it Remark.} The necessity of condition \eqref{conal} is hinted by the following localization argument: for every $k$, one divides the collection of $n$ particles as the top $k$ particles and the bottom $n-k$ particles. If the entire collection has to remain together, the average drift of the top $k$ particles must be less than than the average drift of the bottom $n-k$. A simple computation then implies \eqref{conal}. 
\medskip

Now suppose we have a triangular array of drifts $(\delta_i(n),\;1\le i\le n,\; n\in \mathbb{N})$, such that for every $n$ the vector $(\delta_1(n),\delta_2(n),\ldots,\delta_n(n))$ satisfies condition~\eqref{conal}. 
Note that the market weights \eqref{mkwts} are purely a function of the spacings, and thus have an induced stationary measure for every $n$. Thinking of the market weights under this stationary measure as a random point process in $[0,1]$, we are interested in the limit law of this point process as we take $n$ to infinity. It turns out that any non-trivial limit law is a Poisson-Dirichlet distribution under `universal conditions', and exhibits phase transitions. The results are summarized in the following theorem. For the definition of the Poisson-Dirichlet family of distributions, see Section \ref{pp}.


\begin{thm}\label{mainthmdrift}
Suppose for every $n\in \mathbb{N}$ we have a sequence of constants $(\delta_i(n),\;i=1,2,\ldots,n)$, and consider $n$ interacting particles satisfying the following SDE:
\[
d X_i(t) = \sum_{j=1}^n \delta_j(n)1\left(X_i(t)=X_{(j)}(t)\right)dt + dW_i(t),\quad i=1,2,\ldots,n.
\]
Here $X_{(1)}\ge X_{(2)}\ge\ldots\ge X_{(n)}$ are the ordered processes, and $W$ is an $n$-dimensional Brownian motion. 

Suppose for every $n\in \mathbb{N}$, the drift sequence $(\delta_i(n),\;1\le i\le n)$ satisfies condition~\eqref{conal}. Then by Theorem \ref{theoremN}, the decreasing point process
\begin{equation}\label{pprev}
\biggl(\frac{e^{X_{(j)}(t)}}{\sum_{i=1}^n e^{X_{(i)}(t)}}\biggr)_{j=1,2,\ldots,n}
\end{equation}
converges in distribution as $t\rightarrow \infty$. Let $(\mu_{(1)}(n),\ldots,\mu_{(n)}(n))$ denote a point process (of the market weights) drawn from the stationary law.
Now let $\bar{\delta}(n)= (\delta_1(n)+\ldots+ \delta_n(n))/n$ and assume that there exists  $\eta \in [0,\infty]$ such that for every fixed $i\ge 1$,
\begin{eqnarray}
\lim_{n\rightarrow \infty}\left(\bar{\delta}(n) - \delta_i(n)\right)&=&\eta.\label{conddel2}
\end{eqnarray} 
Additionally, assume that
\begin{eqnarray}
\limsup_{n\rightarrow \infty}\max_{1\le i\le n}\left(\bar{\delta}(n) - \delta_i(n) \right)&\le& \eta.\label{conddel}
\end{eqnarray}
Then, as $n$ tends to infinity,
\begin{itemize}

\item[(i)] if $\eta \in (0, 1/2)$, the point process generated by the market weights converges weakly to a Poisson-Dirichlet process with parameter $2\eta$.

\item[(ii)] If $\eta \in [1/2,\infty]$, all the market weights converge to zero in probability.

\item[(iii)] If $\eta=0$, the market weights converge in probability to the sequence $(1,0,0,\ldots)$, i.e., the largest weight goes to one in probability while the rest go to zero.
\end{itemize}
In particular, $\mu_1(n)$ converges weakly to a non-trivial distribution if and only if $\eta\in (0, 1/2)$.
\end{thm}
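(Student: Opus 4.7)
The plan is to use Theorem \ref{theoremN} to convert the problem into the asymptotic analysis of an explicit function of independent exponentials, and then to treat the three regimes of $\eta$ separately. At stationarity the spacings $Y_j := X_{(j)}(t) - X_{(j+1)}(t)$ are independent with $Y_j \sim \mathrm{Exp}(2\alpha_j)$. Writing $Z_i := \sum_{k=1}^{i-1} Y_k = X_{(1)} - X_{(i)}$ (so $Z_1 = 0$), the market weights take the compact form
\[
\mu_{(i)}(n) = \frac{e^{-Z_i}}{S_n}, \qquad S_n := \sum_{k=1}^n e^{-Z_k},
\]
so that $\mu_{(1)}(n) = 1/S_n$ and everything reduces to controlling the ``partition function'' $S_n$ and the pre-sum weights $e^{-Z_i}$.

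Next I would analyze the scaling of $\alpha_j$. Using $\alpha_j = \sum_{i=1}^{j}(\bar\delta(n) - \delta_i(n))$, condition \eqref{conddel2} combined with Ces\`aro averaging yields $\alpha_j(n)/j \to \eta$ for every fixed $j$, while condition \eqref{conddel} upgrades this to the uniform bound $\alpha_j(n) \le (\eta + o(1))\,j$ valid for all $1 \le j \le n-1$. Consequently, for each fixed $j$ the variable $Y_j$ converges in law to $\mathrm{Exp}(2j\eta)$, and the finite-dimensional marginals of $(e^{-Z_i})_{i\ge 1}$ converge to those of $(\prod_{k<i} V_k)_{i\ge 1}$, where $V_k := e^{-Y_k}$ are independent $\mathrm{Beta}(2k\eta, 1)$ in the limit.

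The three cases then split as follows. For $\eta=0$, the bound $\alpha_1(n) = \bar\delta(n) - \delta_1(n) \to 0$ forces $Y_1 \to \infty$ in probability; since $Z_i \ge Y_1$ for $i \ge 2$, this makes $\sum_{i\ge 2} e^{-Z_i} \to 0$ in probability, so $S_n \to 1$ and $\mu_{(1)} \to 1$. For $\eta \in (0, 1/2)$, a direct computation gives
\[
\mathbb{E}[e^{-Z_j}] \;=\; \prod_{k=1}^{j-1} \frac{2\alpha_k}{2\alpha_k+1},
\]
which is of order $j^{-1/(2\eta)}$ in the limit; since $1/(2\eta) > 1$ the sequence $S_n$ is uniformly integrable and converges in distribution to a finite positive $S_\infty$. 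A truncation argument leveraging the uniform upper bound on $\alpha_j$ controls the tail $\sum_{k>K} e^{-Z_k}$ in probability uniformly in $n$, giving joint convergence of the entire ranked sequence. For $\eta \in [1/2, \infty)$ the same moment computation yields $\mathbb{E}[S_n] \ge c\sum_{j=1}^n j^{-1/(2\eta)} \to \infty$, and a second-moment estimate exploiting independence of the $Y_j$ upgrades this to $S_n \to \infty$ in probability, forcing $\mu_{(1)} = 1/S_n \to 0$ (and hence every $\mu_{(i)} \to 0$). The subcase $\eta = \infty$ is easier: $Y_j \to 0$ for each fixed $j$, so $e^{-Z_j} \to 1$ pointwise and $S_n \to \infty$.

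The main obstacle is case (i), where the limit must be identified as the Poisson--Dirichlet law with parameter $2\eta$. Establishing joint convergence and a.s.\ finiteness of $S_\infty$ is straightforward, but pinning down the limit as precisely $\mathrm{PD}(2\eta)$ requires matching to a known representation; the cleanest route is to recognize the $\mathrm{Beta}(2k\eta,1)$ product structure inherited from $V_k = e^{-Y_k}$ as a decreasing stick-breaking/residual-allocation construction of $\mathrm{PD}(2\eta)$, or equivalently to identify the normalized decreasing sequence with the ordered atoms of a Gamma subordinator up to time $2\eta$. A secondary delicate point is sharpness at the boundary $\eta=1/2$: here $\mathbb{E}[S_n] \asymp \log n$, and one genuinely needs a variance/concentration bound rather than a first-moment argument to conclude $1/S_n \to 0$ in probability.
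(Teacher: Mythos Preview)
Your overall architecture---reducing to the explicit exponential spacings and splitting into three regimes---matches the paper, but the execution in each regime diverges substantially from what the paper does, and one regime has a real gap.

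\textbf{Case (iii), $\eta=0$: this is where your plan breaks.} From $\alpha_1(n)\to 0$ you correctly get $Y_1\to\infty$ in probability, but the step ``since $Z_i\ge Y_1$ for $i\ge 2$, this makes $\sum_{i\ge 2}e^{-Z_i}\to 0$'' does not follow: that bound gives only $\sum_{i\ge 2}e^{-Z_i}\le (n-1)e^{-Y_1}$, and you have no rate on $\alpha_1(n)\to 0$, so $(n-1)e^{-Y_1}$ need not tend to zero. The paper instead computes $\e(1/\mu_{(1)}(n))=1+\sum_{i=1}^{n-1}\prod_{j=1}^{i}(1+1/(2\alpha_j(n)))^{-1}$ and uses the \emph{uniform} upper bound $\alpha_j(n)\le 2j c_n$ (from \eqref{conddel}) together with dominated convergence to force the sum to zero. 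You need the full product structure, not just the first spacing.

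\textbf{Case (i), $\eta\in(0,1/2)$: your route is genuinely different and essentially correct, with one caveat.} The paper proves PD convergence by coupling the top $b$ spacings to the top $b$ order statistics of $n$ i.i.d.\ exponentials (R\'enyi representation), invoking Talagrand's REM-style argument (Theorem~\ref{iidexppd}) for the i.i.d.\ case, and then letting $b\to\infty$ after controlling the residual $\ell^1$ distance. Your proposal bypasses all of this by identifying the limiting ratio structure directly: in the limit $V_k=e^{-Y_k}\sim\mathrm{Beta}(2k\eta,1)$ independently, and for the ranked points $X_i=(\Gamma_i)^{-1/(2\eta)}$ of the PPP with intensity $x^{-2\eta-1}dx$ one has $X_{i+1}/X_i=(\Gamma_i/\Gamma_{i+1})^{1/(2\eta)}\sim\mathrm{Beta}(2i\eta,1)$ independently, which matches. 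Combined with your uniform tail bound (which is correct via $\alpha_j(n)\le(\eta+\epsilon)j$), this gives a clean and arguably shorter proof. However, your alternative ``ordered atoms of a Gamma subordinator up to time $2\eta$'' is wrong: that construction yields Kingman's $\mathrm{PD}(0,2\eta)$, not the stable-type $\mathrm{PD}(2\eta,0)$ that the paper means by $\mathrm{PD}(2\eta)$. Drop that aside.

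\textbf{Case (ii), $\eta\ge 1/2$: your sketch is vague.} The claim $\e[S_n]\ge c\sum_j j^{-1/(2\eta)}$ requires a \emph{lower} bound on $\e[e^{-Z_j}]$, i.e.\ an upper bound on $1/\alpha_k$, which conditions \eqref{conddel2}--\eqref{conddel} do not supply uniformly in $k$. A second-moment argument is not the right tool either. What does work along your lines is: for fixed $K$, the partial sum $\sum_{i\le K}e^{-Z_i}$ converges in law (via \eqref{conddel2}), and the limiting infinite sum diverges a.s.\ when $2\eta\ge 1$ because $\sum_{k<i}Y_k^\infty-\tfrac{1}{2\eta}\log i$ converges a.s.\ (bounded $L^2$ martingale). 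The paper takes a different route entirely, using a deterministic surrogate $\bar\mu_V$ and the Poincar\'e inequality for exponentials (Lemma~\ref{phase}) to transfer estimates from $\bar\mu_V$ to $\mu_V$; this machinery is also what drives Theorem~\ref{phasegrav}.
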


The conditions \eqref{conddel2} and \eqref{conddel} admit natural interpretations. Condition \eqref{conddel2} is the `continuity at the edge' condition mentioned in the abstract. It means that the top few particles get almost the same drift. This is a weaker version of a general continuity condition that would demand that whenever two particles have almost the same rank, they would get almost the same drifts. The condition \eqref{conddel} can be understood as saying that the highest ranked particles get the minimum drifts, in a limiting sense. Although the conditions seem to be reasonable, it is still surprising that the conditions are also sharp, in the sense that Poisson-Dirichlet convergence may fail if either \eqref{conddel2} or \eqref{conddel} fails to hold as shown by the counterexamples in  Section \ref{examples}. Finally we would like to mention that the two degenerate limits can be seen as limits of the Poisson-Dirichlet laws as the parameter $2\eta$ tends to zero or one.

Now, Theorem \ref{mainthmdrift} says that $\mu_1(n) \rightarrow 0$ in probability if $\eta \ge 1/2$. It is natural to wonder about the rate of convergence. It turns out that some exact information may be extracted after putting in an additional `Lipschitz' assumption on the drifts, even at the `critical value' $\eta = 1/2$.

\begin{thm}\label{phasegrav}
Assume the same setup as in Theorem \ref{mainthmdrift}. Let
\[
\eta = \lim_{n\rightarrow \infty} (\bar{\delta}(n) - \delta_1(n)),
\]
and instead of \eqref{conddel2} and \eqref{conddel}, assume that there exists constants $C > 0$ and $0<\gamma < 1$ such that whenever $i \le \gamma n$, we have
\begin{equation}\label{totlips}
\abs{\delta_i(n) - \delta_1(n)} \le \frac{C(i-1)}{n}.
\end{equation}
Then, if $\eta > 1/2$, we have 
\[
\frac{\log \mu_1(n)}{\log n} \rightarrow \frac{1}{2\eta} -1  \ \text{in probability as } n \rightarrow \infty.
\]
If $\eta = 1/2$, and we have the additional condition that $\bar{\delta}(n) - \delta_1(n) = \eta + O(1/\log n)$, then 
\[
\frac{\log \mu_1(n)}{\log\log n} \rightarrow -1 \ \text{in probability as } n \rightarrow \infty.
\]
\end{thm}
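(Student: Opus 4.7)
The plan is to reduce the theorem to an analysis of the partial sum
\[
T_n := \sum_{j=1}^{n-1} e^{-S_j},\qquad S_j := Y_1+\cdots+Y_j,
\]
via the identity $\mu_1(n) = (1+T_n)^{-1}$, which follows from writing $X_{(i)} = X_{(1)}-S_{i-1}$ in the definition of the market weights. Theorem~\ref{theoremN} realizes the $Y_k$ as independent exponentials of rate $2\alpha_k$ under the stationary law, so it is enough to prove $\log T_n/\log n\to 1-1/(2\eta)$ in probability when $\eta>1/2$, and $\log T_n/\log\log n\to 1$ in probability when $\eta=1/2$.

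The first step is to pin down $\alpha_k$. Splitting
\[
\alpha_k \;=\; k\bigl(\bar\delta(n)-\delta_1(n)\bigr)+\sum_{i=1}^k\bigl(\delta_1(n)-\delta_i(n)\bigr),
\]
the Lipschitz bound \eqref{totlips} forces the second sum to be $O(k^2/n)$ for $k\le\gamma n$. Hence, for any fixed $\gamma'<\gamma$ and all $k\le\gamma' n$, we have $\alpha_k = k\eta\,(1+o(1))$ in the supercritical case, and under the sharper hypothesis of the critical case $\alpha_k=(k/2)\bigl(1+O(1/\log n)+O(k/n)\bigr)$. Therefore $\e[S_j] = \sum_{k=1}^j 1/(2\alpha_k) = (2\eta)^{-1}\log j + O(1)$ uniformly on $j\le\gamma' n$ (respectively on $j\le\gamma' n/\log n$), and $\mathrm{Var}(S_j) = \sum 1/(4\alpha_k^2) = O(1)$. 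The martingale $S_j-\e[S_j]$ thus has summable quadratic variation, so it converges almost surely to a finite random variable $Z_\infty$, with $\sup_j|S_j-\e[S_j]|=O_P(1)$ by Doob's inequality.

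For the supercritical case, set $\beta=1/(2\eta)\in(0,1)$. Direct computation gives $\e[e^{-S_j}]=\prod_{k=1}^j 2\alpha_k/(2\alpha_k+1)\asymp j^{-\beta}$ on the controlled window, and the pointwise monotonicity $e^{-S_j}\le e^{-S_{\lfloor\gamma' n\rfloor}}$ absorbs the remaining tail indices into a Markov bound $\e[T_n]=O(n^{1-\beta})$, yielding $T_n\le n^{1-\beta+\varepsilon}$ with probability tending to one for every $\varepsilon>0$. For the matching lower bound I use
\[
T_n \;\ge\; \sum_{j=1}^{\lfloor\gamma' n\rfloor}e^{-\e[S_j]}\,e^{-(S_j-\e[S_j])} \;\ge\; e^{-\sup_j(S_j-\e[S_j])}\sum_{j=1}^{\lfloor\gamma' n\rfloor}e^{-\e[S_j]},
\]
in which the deterministic sum is of order $n^{1-\beta}$ and the random prefactor is a.s.\ positive, so $\log T_n/\log n\to 1-\beta$ in probability. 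The critical case runs along the identical template but on the shorter window $k\le\gamma' n/\log n$: the finer Lipschitz estimate yields $\e[e^{-S_j}]\asymp 1/j$, hence $\e[T_n]=O(\log n)$, while the lower-bound deterministic sum is $\sum 1/j\sim\log n$, giving $\log T_n/\log\log n\to 1$. The main technical obstacle in both regimes is that beyond the Lipschitz window the drifts are uncontrolled, so any bound on $\alpha_k$ for $k$ in that range rests solely on positivity; this is handled entirely by the monotonicity of $e^{-S_j}$ in $j$, which turns the unknown tail into at most $n$ copies of the last controlled term $\e[e^{-S_{\lfloor\gamma' n\rfloor}}]$ — harmless because this matches the order of the main contribution.
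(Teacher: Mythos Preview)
Your argument is correct and follows the same skeleton as the paper: restrict to a window $k\le\gamma'n$ on which the Lipschitz hypothesis pins down $\alpha_k\sim k\eta$, exploit the summability of $\sum_k(2\alpha_k)^{-2}$ for concentration of $S_j$ around its mean, and use the monotonicity $e^{-S_j}\downarrow$ to dispose of the uncontrolled tail $j>\gamma'n$. The paper packages the concentration step differently, through its Lemma~\ref{phase}: it compares $\mu_V$ with the deterministic surrogate $\bar\mu_V$ and bounds $\e(\log\mu_V-\log\bar\mu_V)^2$ by $8\sigma^2$ using the Poincar\'e inequality for exponentials. Your route---Doob's $L^2$ inequality on the linear martingale $M_j=S_j-\e S_j$ to get $\sup_j|M_j|=O_P(1)$, then the sandwich $e^{-\sup M_j}\sum_j e^{-\e S_j}\le T_n$ for the lower bound and Markov on $\e[T_n]$ for the upper---is more elementary and bypasses Poincar\'e entirely. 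For the tail, the paper obtains the purely deterministic bound $|\log\mu_1-\log\mu_1'|\le(1-\gamma')/\gamma'$ (there are at least $\lfloor\gamma'n\rfloor$ terms before the cutoff, each dominating every term after it), which is cleaner than your Markov step but equivalent in effect.

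Two small points of care. First, your phrase ``the martingale \ldots\ converges almost surely to $Z_\infty$'' is not quite right, since the spacings $Y_k$ and hence $M_j$ depend on $n$; what you actually need---and what Doob delivers---is the \emph{uniform-in-$n$} bound $\e\bigl[\sup_{j\le\gamma'n}M_j^2\bigr]\le 4\sum_{k\le\gamma'n}(2\alpha_k(n))^{-2}\le C(\eta)$. Second, ``any fixed $\gamma'<\gamma$'' is too generous: you need $\gamma'$ small enough (the paper takes $\gamma'=(\eta/2C)\wedge\gamma$) so that the $O(k^2/n)$ correction to $\alpha_k$ cannot overwhelm $k\eta$ on the window, guaranteeing $\alpha_k\ge ck$ there. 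Neither point affects the validity of the argument once stated with this care; in the critical case the shorter window $k\le\gamma'n/\log n$ is also unnecessary---the full window $k\le\gamma'n$ already gives $\sum_{k\le j}(2\alpha_k)^{-1}=\log j+O(1)$ under the hypothesis $\bar\delta(n)-\delta_1(n)=\tfrac12+O(1/\log n)$.
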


Note that Theorem \ref{mainthmdrift} describes a different kind of limit than the other possibility of starting with a countably infinite collection of rank-dependent Brownian motions and describing their stationary behavior. To see the difference compare with the analysis of the Infinite Atlas model done in \cite{palpitman}.

The paper is organized as follows. Section \ref{bfkmodels} discusses the implications of the convergence to Poisson-Dirichlet law for the Banner-Fernholz-Karatzas models of equity markets. In particular, we compute the asymptotic laws of certain functionals of the market weights by resorting to similar computations for the Poisson-Dirichlet law. They include the expected \emph{market entropy} $S=-\sum_i \mu_i\log \mu_i$ and the expected $p$th moment $D_p= \sum_{i}\mu_i^p$. Both these functions are considered in stochastic portfolio theory as measures of \emph{market diversity} (see \cite[page 31]{fern02}). In Section \ref{pp}, we introduce the Poisson-Dirichlet law and discuss its relation with certain Poisson processes on the real line. Section \ref{triangle} contains proofs of all the major theorems in this paper. Finally, in Section \ref{examples}, we discuss counterexamples demonstrating the tightness of the conditions in our theorems.

\section{Application in BFK and related models}\label{bfkmodels}

\smallskip

Fernholz in his 2002 book \cite{fern02} introduces solutions of \eqref{ranksde} to model the time dynamics of the logarithm of the market capitalizations of different companies in an equity market. In other words, he considers a stock market with $n$ companies whose total worth in stocks are coordinate-wise Exponentials of the solution of equation \eqref{ranksde}. A major objective of his work is to explain the following curious empirical fact regarding what are known as the \emph{market weights}. The market weight of a company is defined to be the ratio of its market capitalization to the total market capitalization of all the companies, that is, the proportion of total market capital that belongs to a particular company. The market weight is a measure of the influence that the company exerts on the entire market and have been studied extensively in the economics literature over the decades for their peculiar structure and very stable behavior in time. For example see articles by Hashemi \cite{hashemi00}, Ijiri and Simon \cite{ijirisimon74}, Jovanovic \cite{jovanovic82}, and Simon and Bonini \cite{simonbonini58}. The structure in the market weights arises if one arranges the market weights data in decreasing order and plots their logarithms against the logarithm of their ranks. This log-log plot is referred to as the \emph{capital distribution curve}. For example, if we consider Fernholz's model, and denote by $X$ a solution of SDE \eqref{ranksde}, the ordered market weights are given by the transformation
\begin{equation}\label{mkwts}
\mu_i= \frac{e^{ X_{(i)}}}{\sum_{j=1}^{n} e^{ X_{(j)}}},\quad i=1,2,\ldots,n.
\end{equation}
The capital distribution curve will then be a plot of $\log \mu_i$ versus $\log i$. Empirically this plot exhibits nearly polynomial decay with decreasing ranks. See Figure 1 below (reproduced from \cite{fern02}) which shows capital distribution curves between 1929 and 1999 for all the major US stock markets (NYSE, AMEX, and NASDAQ) combined.    
\begin{figure}[t]
\includegraphics[width=4 in]{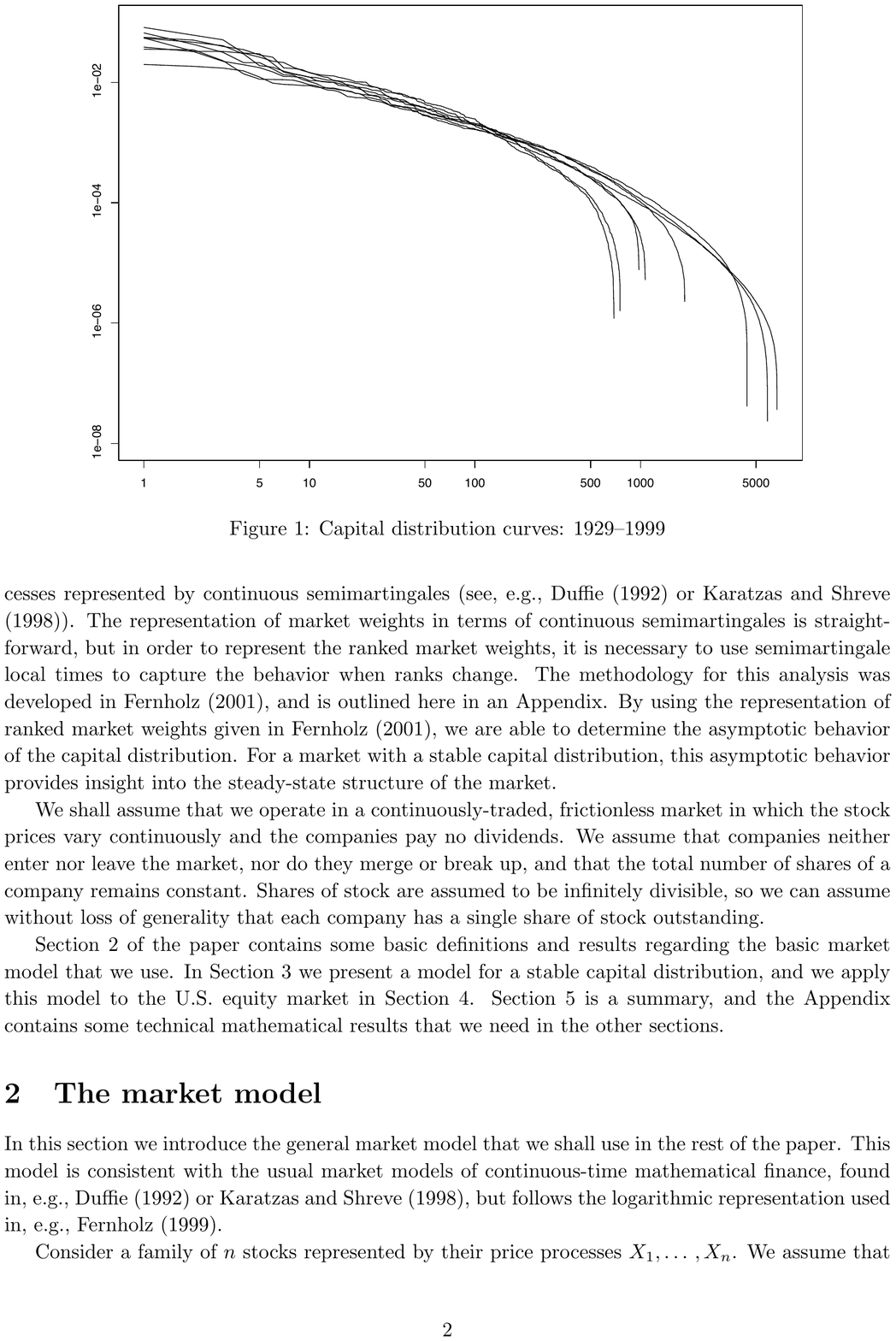}
\caption{Capital distribution curves: 1929-1999}
\end{figure}
To explain this fact, Fernholz considers a special case of \eqref{ranksde} called the \emph{Atlas model} where $\delta_n >0$, and $\delta_i=0$ for all $i\neq n$. He conjectured that the capital distribution curve obtained from the diffusion satisfying the SDE for the Atlas model converges to a stationary distribution under which it is roughly a straight line. 

It is interesting to note that the phase transition phenomenon described in Theorem \ref{mainthmdrift} predicts some remarkable outcomes about models of stock market capitalizations as given in Banner, Fernholz, and Karatzas \cite{atlasmodel}, \cite{fern02}, and \cite{karsurv}. 

As pointed out in \cite{atlasmodel}, it is empirically observed that the companies with the few largest market weights get the least drift, and most movement seems to come from the bottom of the pile. This validates both our assumptions about the drift sequences made in \eqref{conddel2} and \eqref{conddel}. Hence our results show the following universality property of the BFK models. When the number of companies in the market is {large}, one of the following must happen, either, the market share of each company is negligible with respect to the entire market, or the emergence of a {\lq}superpower{\rq} which dominates the market, or a delicate equilibrium under which the market weights are arranged as a particular realization of a Poisson-Dirichlet process. These conclusions hold, no matter what specifically the values of the drifts are, as long as conditions \eqref{conddel2} and \eqref{conddel} are satisfied. The atoms of Poisson-Dirichlet are well-known to decay polynomially (the slope of the log-log plot for PD$(\alpha)$ being roughly $-1/\alpha$) which validates Fernholz's conjecture. 

The Atlas model (\cite{atlasmodel}) is an example where such a phase transition takes place. 
\smallskip

\noindent\textbf{Example.}
Consider the Atlas model with $n$ particles (\cite{atlasmodel}, \cite{palpitman}): for $i=1,2,\ldots,n$, let
\[
d X_i(t) = \eta_n 1\left\{ X_i(t)= X_{(n)}(t)\right\}dt + dW_i(t),\quad \eta_n >0,
\]
where $W$ is an $n$-dimensional Brownian motion. The drift sequence is clearly given by $\delta_n(n)=\eta_n$ and $\delta_i(n)=0$ for $i=1,2,\ldots,n-1$. Thus, $\bar{\delta}(n)= \eta_n/n$, and the drift sequence satisfies condition \eqref{conal}:  
\[
\alpha_k=\sum_{i=1}^k \left( \bar{\delta}(n) - \delta_i(n)\right) = \frac{k\eta_n}{n} > 0,\quad k=1,2,\ldots,n-1.
\]
Thus, by Theorem \ref{theoremN}, there is a unique stationary distribution under which the spacings $Y_{(i)}(t)=X_{(i)}(t)-X_{(i+1)}(t)$ are independent Exponential random variables with rates $2\alpha_i$. Also, note that if $\eta=\lim_n \eta_n/n$ exists in $[0,\infty]$, then
\[
\limsup_{n\rightarrow \infty}\max_i \left(\bar{\delta}(n) - \delta_i(n)\right)=\eta, \quad \lim_{n\rightarrow\infty}\left(\bar{\delta}(n) - \delta_i(n) \right)=\eta, \quad i=1,2,\ldots
\]
Let $Q_n$ be the stationary law of the spacing system in an $n$-particle Atlas model. It follows from Theorem \ref{mainthmdrift} that  as $n$ tends to infinity, the point process of market weights under $Q_n$ either converges to a non-degenrate PD distribution, or to one of the two degenerate limit depending on the value of $\eta$. Since the atoms of the Poisson-Dirichlet laws have polynomial decay, the Atlas model fits into the observed data in Figure 1.

Moreover, the drift sequence trivially satisfies the Lipschitz condition \eqref{totlips} in Theorem \ref{phasegrav} for $\gamma=1/2$. Hence, when $\eta > 1/2$ the market weights go to zero at a polynomial rate. Finally, suppose
\[
\bar{\delta}(n) - \delta_1(n) = \eta_n/n = \frac{1}{2} + O(1/\log n).
\]
Then, the market weights go to zero at a logarithmic rate. In particular, this happens if $\eta_n = n/2$.

\medskip

We introduce another model, the \emph{one-dimensional gravity model}, which satisfies such a transition. 
\smallskip

\noindent\textbf{Example.} For a parameter sequence $\eta_n >0$, let
\begin{equation}\label{whatisgravity}
\quad d X_i(t) = \frac{\eta_n}{n}\sum_{j\neq i} \text{sign}\left(X_j(t)-X_i(t)\right)dt + dW_i(t),\quad i=1,2,\ldots,n.
\end{equation}
Here $(W_1,W_2,\ldots,W_n)$ is an $n$-dimensional Brownian motion. It is straightforward to verify that SDE \eqref{whatisgravity} is a particular case of \eqref{ranksde} where the drift that the $i$th ranked particle gets is $\delta_i=\eta_n(2i-n-1)/n$. The more striking property of this model is the mutual attraction between particles which is due to the one-dimensional gravitational force kernel sign($\cdot$). One can think of the parameter $\eta_n$ as the strength of gravitational pull between the particles. Note that the average drift for the gravity model is zero by symmetry. Moreover, the drift sequence satisfies condition \eqref{conal}:
\[
\alpha_k := \sum_{i=1}^k (\bar{\delta}(n)- \delta_i(n))= {\eta_n}\left(\frac{k(k+1)}{n} - \frac{k(n+1)}{n} \right)= \frac{\eta_n k(n-k)}{n},
\]
which is positive for all $k=1,2,\ldots,n-1$. Thus, from Theorem \ref{theoremN}, it follows that there is a unique stationary measure for the spacings $(Y_1(t), \ldots, Y_{n-1}(t))$ under which the coordinates are independent and the $i$th spacing is distributed as Exponential with rate $2\alpha_i$.

Now, suppose that $\eta:=\lim_n \eta_n$ exists in $[0,\infty]$. Then note that 
\[
\begin{split}
\lim_{n\rightarrow\infty}&\left( \bar{\delta}(n) - \delta_i(n)\right)=\eta, \quad i=1,2,\ldots\\
\lim_n \max_{1\le i\le n}& \left(\bar{\delta}(n) - \delta_i(n)\right)=\lim_n \eta_n = \eta.
\end{split}
\]
Thus, the gravity model satisfies conditions \eqref{conddel2}, \eqref{conddel} for Theorem \ref{mainthmdrift}. Additionally, it follows that
\[ 
\abs{\delta_i(n) - \delta_1(n)}\le \frac{C(i-1)}{n},\quad\forall\; n\in \mathbb{N},
\]
for some constant $C$ depending on $\eta$. Thus, the gravity model also satisfies condition \eqref{totlips} for Theorem \ref{phasegrav}. We can thus conclude that all the conclusions of phase transition in Theorems \ref{mainthmdrift} and \ref{phasegrav} hold for the gravity model as $\eta$ varies from zero to infinity.

\subsection{Some benefits of the Poisson-Dirichlet law.}

The limiting Poisson-Dirichlet law helps us to compute asymptotic laws of functions of the market weights under stationary distribution. Various distributional properties of PD($\alpha$), $0<\alpha<1$, have been studied extensively, and we only use the most important ones for our purpose here. Many more identities together with their various applications can be found in \cite{poidir97}.  

The following theorem, whose proof can be found in Section \ref{triangle}, is a typical example of how such properties can be put to use in the context of BFK models. 

\begin{thm}\label{benefits}
For every $n\in\mathbb{N}$, consider the solution of the SDE \eqref{ranksde} with a drift sequence $\{ \delta_1(n),\delta_2(n),\ldots,\delta_n(n)\}$. Assume that the array of drifts satisfies conditions \eqref{conal} of Theorem \ref{theoremN}, and \eqref{conddel2} and \eqref{conddel} of Theorem \ref{mainthmdrift} with $\eta\in(0,1/2)$. We have the following conclusions. 
\smallskip

\noindent(i) The asymptotic moments of the maximum market weight $\mu_1(n)$ under the stationary law is given by
\begin{equation}\label{moment}
\lim_{n\rightarrow \infty} \e(\mu_1^p(n)) = \frac{1}{\Gamma(p)}\int_0^{\infty}t^{p-1}e^{-t}\psi_{2\eta}^{-1}(t)dt, \quad p > 0,
\end{equation}
where $\psi_{2\eta}(t)=1 + 2\eta\int_0^{1}(1-e^{-tx})x^{-2\eta-1}dx$. In particular, this determines the asymptotic law of the highest market weight.
\smallskip

\noindent(ii) For $p > 2\eta$, let $D_p(n)= \sum_{i=1}^n \mu_i^p(n)$. Then the asymptotic expectation of $D_p(n)$ is given by
\begin{equation}\label{limdp}
\lim_{n\rightarrow\infty}\e D_p(n)=\frac{\Gamma(p-2\eta)}{\Gamma(p)\Gamma(1-2\eta)}.
\end{equation}
\smallskip

\noindent(iii) Define the market entropy $S(n)=-\sum_{i=1}^n \mu_i(n)\log \mu_i(n)$. Then
\begin{equation}\label{entropy}
\lim_{n\rightarrow \infty} \e S(n) = 2\eta\sum_{k=1}^{\infty}\frac{1}{k(k-2\eta)}.
\end{equation}
\end{thm}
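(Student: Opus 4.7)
The plan is to derive all three statements from the weak convergence of the point process of market weights to PD$(2\eta)$ given by Theorem~\ref{mainthmdrift}, combined with classical moment identities for the Poisson-Dirichlet law and a uniform tail estimate on individual market weights. The common technical ingredient for (ii) and (iii) is the following. Under the stationary measure the spacings $Y_k$ are independent Exp$(2\alpha_k(n))$, so $\mu_i(n) \le e^{X_{(i)}-X_{(1)}} = \prod_{k=1}^{i-1}e^{-Y_k}$ and hence
\[
\e\bigl[\mu_i^p(n)\bigr]\;\le\;\prod_{k=1}^{i-1}\frac{2\alpha_k(n)}{2\alpha_k(n)+p}.
\]
Condition~\eqref{conddel} implies that for every $\gke>0$ and every sufficiently large $n$, $\alpha_k(n)\le k(\eta+\gke)$ for all $k$, and a standard logarithmic expansion of the product then gives the uniform bound $\e[\mu_i^p(n)] \le C_\gke\, i^{-p/(2(\eta+\gke))}$.

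Part (i) is immediate: since $\mu_1(n)\in[0,1]$, the function $x\mapsto x^p$ is bounded and continuous on $[0,1]$, so Theorem~\ref{mainthmdrift} yields $\e\mu_1^p(n)\to\e V_1^p$, where $V_1$ is the largest atom of PD$(2\eta)$. The integral formula for $\e V_1^p$ is a classical identity: writing $V_1 = J_1/T$ where the $J_i$ are the ranked points of a Poisson point process on $(0,\infty)$ with intensity $2\eta\, x^{-2\eta-1}dx$ and $T=\sum_i J_i$ is the total mass, one applies the gamma identity $T^{-p}=\Gamma(p)^{-1}\int_0^\infty t^{p-1}e^{-tT}dt$ and evaluates the resulting expectation via the exponential formula for Poisson functionals; the quantity $\psi_{2\eta}(t)$ appears as the associated Laplace exponent.

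For part (ii), fix $\gke$ small enough that $p > 2(\eta+\gke)$, which is possible since $p>2\eta$. The tail estimate above gives $\sum_{i>K}\e\mu_i^p(n)\to 0$ as $K\to\infty$, uniformly in $n$. Splitting $D_p(n) = \sum_{i\le K}\mu_i^p(n) + \sum_{i>K}\mu_i^p(n)$, the head is bounded by $K$ and converges in distribution to $\sum_{i\le K}V_i^p$ by weak convergence of the first $K$ coordinates on $[0,1]^K$, so the expectations converge; letting $K\to\infty$ then yields $\e D_p(n)\to\e\sum_i V_i^p$. The limit is identified as $\Gamma(p-2\eta)/(\Gamma(p)\Gamma(1-2\eta))$ by the classical Poisson-Dirichlet moment identity, obtained by the same gamma-integral technique together with Campbell's formula applied to $\sum_i J_i^p$.

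Part (iii) uses the same truncation, now applied to $\mu_i\log(1/\mu_i)$. The elementary inequality $x|\log x|\le\gke^{-1}x^{1-\gke}$ on $(0,1]$, together with the tail estimate at exponent $1-\gke > 2\eta$ (possible by choosing $\gke<1-2\eta$, which is available because $\eta<1/2$), gives uniform control of $\sum_{i>K}\e[\mu_i(n)|\log\mu_i(n)|]$; passing to the limit term by term produces the mean entropy of PD$(2\eta)$. Evaluating this as $2\eta\sum_{k\ge 1}1/(k(k-2\eta))$ amounts to using the size-biased representation in which the first size-biased pick $W_1$ from PD$(2\eta)$ has the Beta$(1-2\eta,\,2\eta)$ distribution, so that $\e[-\sum_i V_i\log V_i] = -\e\log W_1 = \psi(1)-\psi(1-2\eta)$, and then expanding this difference of digamma values as the partial-fraction series $\sum_{k\ge 1}(1/(k-2\eta)-1/k)$. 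The main obstacle throughout is the uniform tail bound on $\e[\mu_i^p(n)]$; once this is in place, all three statements follow from Theorem~\ref{mainthmdrift} and the quoted Poisson-Dirichlet identities.
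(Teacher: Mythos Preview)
Your proof is correct. Parts (i) and (ii) follow essentially the same line as the paper: continuous mapping plus boundedness for (i), and for (ii) the identical tail bound $\e[\mu_i^p(n)] \le \prod_{k<i}(1+p/2\alpha_k(n))^{-1}$ (controlled via $\alpha_k(n)\le k(\eta+\gke)$ from \eqref{conddel}) together with dominated convergence and the Pitman--Yor moment identity.

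Part (iii), however, takes a genuinely different route. The paper observes that $h_n(p):=\e\sum_i \mu_i^p(n)$ and $h(p):=\e\sum_i V_i^p$ are convex in $p$ on $(2\eta,\infty)$, invokes the fact that pointwise convergence of convex functions forces convergence of derivatives in the interior, and reads off the entropy as $-h_n'(1)\to -h'(1)$; the constant is then extracted by differentiating the explicit Gamma expression \eqref{limdp} at $p=1$ and using the digamma series. Your approach instead recycles the domination machinery from (ii) directly, via $x|\log x|\le \gke^{-1}x^{1-\gke}$ with $1-\gke>2\eta$, and evaluates the limit through the size-biased identity $\e S=-\e\log W_1$ with $W_1\sim\mathrm{Beta}(1-2\eta,2\eta)$. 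Your route is more self-contained (no appeal to the convex-analytic derivative lemma) and makes transparent why $\eta<1/2$ is needed at this step; the paper's route is slicker once (ii) is in hand and avoids invoking the GEM/size-biased representation. Both land on the same digamma difference $\psi(1)-\psi(1-2\eta)$.
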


\section{Point processes and the Poisson-Dirichlet law}\label{pp}

In this section we introduce the Poisson-Dirichlet law of random point processes and discuss the notion of weak convergence on the underlying space.   

Recall that a point process can be viewed as a random counting measure. For example, the Poisson point process with a $\sigma$-finite intensity measure $\mu$, PPP($\mu$), is defined to be a random point measure with the following properties. For any collection of disjoint Borel subsets $\{E_1,E_2,\ldots,E_k\}$ of the real line satisfying $\mu(E_i)<\infty$ for all $i$, let $N(E_i)$ be the number of elements of the PPP($\mu$) that are contained in $E_i$. Then the coordinates of the random vector $(N(E_1),N(E_2),\ldots,N(E_k))$ are independent and the $i$th coordinate is distributed as Poisson($\mu(E_i)$). If the random point process has a maximum atom almost surely, it is customary to order the atoms as a random decreasing sequence of real numbers. This happens, for example, when the intensity measure $\mu$ for PPP($\mu$) is such that $\mu[1,\infty)<\infty$.

To discuss weak convergence of such random sequences it is necessary to define an underlying Polish space. The separable complete metric space $(\mcal{S},\mathbf{d})$ we use for this article is the space of all real decreasing sequences with the metric of pointwise convergence, i.e.,
\begin{equation}\label{sd}
\mcal{S}=\{x=(x_i),\; x_1 \ge x_2\ge \ldots  \},\quad {\bf d}(x,y)=\sum_{i=1}^{\infty}\frac{\abs{x_i-y_i}\wedge 1}{2^i}. 
\end{equation}
The fact that this is a Polish space and other properties can be found in page 45 of~\cite{resnick}. From now on, our point processes will be random sequences taking values in the state space $(\mcal{S},\mathbf{d})$. Any finite sequences $(x_1,x_2,\ldots,x_n)$ will correspond to an infinite sequence $(x_1,x_2,\ldots,x_n,-\infty,-\infty,-\infty,\ldots)$ in $(\mcal{S},\mathbf{d})$.

Here is a typical example of a point process which will be of use to us later on. Consider a sequence of independent standard Exponential random variables $\eta_i,\;i=1,2,\ldots$. Let $\eta_1(n) \ge \eta_2(n) \ge \ldots\ge \eta_n(n)$ be the decreasingly ordered rearrangement among the first $n$ values of the sequence. That is 
\[
\eta_1(n)= \max_{1\le i\le n}\eta_i,\quad \ldots,\quad \eta_n(n)=\min_{1\le i\le n}\eta_i.
\]

\begin{lem}\label{orderedexp}
The random point process $\pp_n^*=(\eta_i(n)-\log n),\;i=1,2,\ldots,n$, converges weakly to a Poisson point process on the line with intensity $d\mu^* =e^{-x}dx$.
\end{lem}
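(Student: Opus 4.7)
The plan is to reduce the statement to the classical convergence of uniform order statistics to a Poisson process on $(0,\infty)$, via the inverse exponential transformation.

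First I would use the distributional identity $\eta_i \overset{d}{=} -\log U_i$, where $U_1, U_2, \ldots$ are i.i.d.~$\text{Uniform}(0,1)$ variables. Writing $U_{(1)}\le U_{(2)}\le \cdots\le U_{(n)}$ for the increasing order statistics, the $k$-th largest shifted exponential becomes
\[
\eta_k(n) - \log n = -\log\bigl(n U_{(k)}\bigr), \qquad 1\le k\le n,
\]
so the lemma reduces to showing that the rescaled lower uniform order statistics $\{n U_{(k)}\}$ converge, after applying $x\mapsto -\log x$ coordinatewise, to the atoms of a rate-one Poisson process on $(0,\infty)$.

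Next I would establish the finite-dimensional convergence: for every fixed $K$,
\[
\bigl(n U_{(1)}, n U_{(2)}, \ldots, n U_{(K)}\bigr) \xrightarrow{d} \bigl(T_1, T_2, \ldots, T_K\bigr),
\]
where $T_j = E_1 + \cdots + E_j$ is the sum of $j$ i.i.d.~standard exponentials. This follows quickly from the Renyi representation $(U_{(1)}, \ldots, U_{(n)}) \overset{d}{=} (T_1/T_{n+1}, \ldots, T_n/T_{n+1})$ combined with $T_{n+1}/n \to 1$ almost surely. Applying the continuous mapping theorem coordinatewise with $x\mapsto -\log x$ (continuous on $(0,\infty)$, and $T_j > 0$ almost surely) yields
\[
\bigl(\eta_1(n)-\log n, \ldots, \eta_K(n)-\log n\bigr) \xrightarrow{d} \bigl(-\log T_1, \ldots, -\log T_K\bigr).
\]
The limiting sequence $(-\log T_j)_{j\ge 1}$ is precisely the decreasing enumeration of atoms of the Poisson process on $\mathbb{R}$ with intensity $e^{-x}\dx x$: the rate-one Poisson process on $(0,\infty)$ has atoms $\{T_j\}_{j\ge 1}$, and its push-forward under $y = -\log x$ has intensity $e^{-y}\dx y$.

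The last step is to upgrade this finite-dimensional convergence to weak convergence in the Polish space $(\mcal{S}, \mathbf{d})$. Because the weights $2^{-i}$ in the definition of $\mathbf{d}$ are summable, the contribution of coordinates $i > K$ is bounded by $\sum_{i>K} 2^{-i}$ uniformly across sequences, so the tail can be made arbitrarily small; combined with weak convergence of the first $K$ coordinates for every fixed $K$, standard point-process arguments (as in Chapter 3 of \cite{resnick}) deliver weak convergence in $(\mcal{S}, \mathbf{d})$. This topological passage is the only modest obstacle, but it is routine given the summability of the metric weights and the fact that the limiting point process has all coordinates finite almost surely.
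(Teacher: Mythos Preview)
Your argument is correct, but it takes a different route from the paper. The paper's proof is a one-line application of a general Poisson convergence criterion from extreme value theory (e.g.\ Resnick): if $X_1,\ldots,X_n$ are i.i.d.\ with law $P_n$ and $nP_n(A)\to\mu(A)$ for every Borel set $A$, then the empirical point measure converges weakly to a Poisson random measure with intensity $\mu$; here $P_n$ is the law of $\eta_1-\log n$, for which $nP_n(x,\infty)=ne^{-(x+\log n)}=e^{-x}$, and the result follows immediately. By contrast, you build the limit constructively: transform to uniforms, invoke the R\'enyi representation $(U_{(1)},\ldots,U_{(n)})\overset{d}{=}(T_1/T_{n+1},\ldots,T_n/T_{n+1})$ to get finite-dimensional convergence of $(nU_{(1)},\ldots,nU_{(K)})$ to $(T_1,\ldots,T_K)$, push forward through $x\mapsto-\log x$, and then upgrade to weak convergence in $(\mcal{S},\mathbf{d})$ using the summable weights $2^{-i}$. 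Your approach is longer but more self-contained (it does not need the general Poisson convergence theorem as a black box) and has the pleasant side effect of explicitly identifying the limiting ordered atoms as $(-\log T_j)_{j\ge1}$; this dovetails nicely with the R\'enyi representation \eqref{renyi} that the paper itself uses later in the proof of Lemma~\ref{convergeetilde}. The paper's approach buys brevity; yours buys transparency about the structure of the limit.
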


\begin{proof}
Convergence of the extreme order statistics of iid data to a Poisson point process is a standard exercise in extreme value statistics. For example, see the book by Resnick \cite{resnick}. In general, it follows from the fact that suppose, for every $n\ge 1$, $X_{1}, X_{2}, \ldots, X_{n}$ are $n$ iid data with law $P_n$.  Further assume that there is a measure $\mu$ such that $\lim_{n\rightarrow \infty}nP_n(A)=\mu(A)$ for all Borel subsets $A$ of $\rr$. Then the random point measure induced by the $n$ data points converge weakly to a Poisson Random measure with intensity $\mu$. When the iid data comes from Exponential$(1)$ distribution shifted by $\log n$, $\mu(A)=\int_A e^{-x}\dx x=\mu^*(A)$, and this proves the Lemma.   
\end{proof}

The other point process important in this paper is a one parameter family of laws of point processes known as the Poisson-Dirichlet with parameter $\alpha$, PD($\alpha$), for $0 < \alpha < 1$. The PD family is fundamental and comes up in several contexts. It can also be seen as a part of a wider two parameter family of laws. For an excellent survey of old and new results we suggest the article by Pitman and Yor \cite{poidir97}. 

PD($\alpha$) can be defined in terms of a PPP with intensity measure $d\mu_{\alpha}=x^{-\alpha - 1}dx$ on $(0,\infty)$. Let $(X_i)$ be the decreasingly arranged atoms of PPP($\mu_{\alpha}$). Then, by Proposition 10 in \cite{poidir97}, the point process
\[
V_i= \frac{X_i}{\sum_{j=1}^{\infty} X_j}, \quad i=1,2,\ldots,
\]
is distributed as PD($\alpha$). We take this to be the definition of the PD family. It can be shown that each $V_i$ is strictly between zero and one, and $\sum_i V_i =1$. For more details, see \cite{poidir97}.

Now the Poisson point process in Lemma \ref{orderedexp} has a very fundamental connection with the Poisson-Dirichlet (PD) process which is made clear through the next lemma.

\begin{lem}\label{exppd}
Let $\{Y_i\}$ be a realization of a Poisson point process in decreasing order with intensity $\mu^*$ as in the last lemma. Then for any $\beta > 1$, the law of the point process generated by the sequence
\[
c_i = \frac{\exp \beta Y_i}{ \sum_j \exp \beta Y_j}, \quad i=1,2,\ldots,
\]
is the Poisson-Dirichlet distribution $\text{PD}(1/\beta)$.
\end{lem}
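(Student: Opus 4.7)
The plan is to reduce the lemma to the paper's own characterization of PD$(\alpha)$ (Proposition 10 of \cite{poidir97}) via the deterministic mapping $y \mapsto e^{\beta y}$, and then exploit scale invariance of the normalization $X \mapsto X/\sum_j X_j$.

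First I would push the PPP on $\mathbb{R}$ with intensity $d\mu^* = e^{-x}\,dx$ forward through $\phi(y) = e^{\beta y}$. By the standard mapping theorem for Poisson processes, the image is again a PPP on $(0,\infty)$, and a change of variables $y = \beta^{-1}\log x$, $dy = dx/(\beta x)$, gives its intensity as
\[
\int_A e^{-y}\,dy = \int_A x^{-1/\beta}\,\frac{dx}{\beta x} = \frac{1}{\beta}\int_A x^{-1/\beta - 1}\,dx.
\]
Setting $\alpha = 1/\beta \in (0,1)$, the atoms $X_i := e^{\beta Y_i}$ therefore form the decreasingly ordered points of a PPP with intensity $\alpha\, x^{-\alpha-1}\,dx$ on $(0,\infty)$.

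Next I would reconcile this with the paper's definition of PD$(\alpha)$, which uses intensity $x^{-\alpha-1}\,dx$ (no leading $\alpha$). If $(\widetilde X_i)$ are the ordered atoms of a PPP with intensity $x^{-\alpha-1}\,dx$, then rescaling all atoms by a constant $c > 0$ produces a PPP with intensity $c^{\alpha}\, x^{-\alpha-1}\,dx$; choosing $c = \alpha^{1/\alpha}$ therefore gives a process equal in law to $(X_i)$. But the normalization map $(x_i) \mapsto (x_i / \sum_j x_j)$ is invariant under a common positive rescaling of all coordinates. Hence
\[
(c_i)_{i\ge 1} = \Bigl(\frac{X_i}{\sum_j X_j}\Bigr)_{i\ge 1} \stackrel{d}{=} \Bigl(\frac{\widetilde X_i}{\sum_j \widetilde X_j}\Bigr)_{i\ge 1},
\]
and the right-hand side is PD$(\alpha) = $ PD$(1/\beta)$ by the stated definition.

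The only thing left to check is that the denominator is almost surely finite so that the normalized sequence is well defined. For a PPP with intensity $\alpha x^{-\alpha-1}\,dx$ one has $\sum_j X_j < \infty$ a.s. precisely when $\alpha \in (0,1)$, since $\int_0^1 x \cdot x^{-\alpha-1}\,dx < \infty$ and $\int_1^\infty x^{-\alpha-1}\,dx < \infty$; this is exactly the hypothesis $\beta > 1$. I expect no serious obstacle: the entire argument is a one-line consequence of the Poisson mapping theorem, the change-of-variables identity, and scale invariance of the normalization, with the only mild bookkeeping being the constant factor in the intensity (which is absorbed by the invariance).
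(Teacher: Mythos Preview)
Your proposal is correct and follows essentially the same route as the paper: the paper also pushes the PPP forward through $y\mapsto e^{\beta y}$ (citing Resnick's mapping proposition) to obtain intensity $\beta^{-1}x^{-1/\beta-1}\,dx$, and then remarks that the constant $\beta^{-1}$ is a scaling parameter that cancels in the ratio. Your explicit rescaling argument and the finiteness check for $\sum_j X_j$ are minor elaborations of exactly these two sentences.
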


\begin{proof}
From Proposition 5.2 in \cite[page 121]{resnick}, it follows that the point process $\exp \beta Y_i$ is a Poisson point process on the real line of intensity $\beta^{-1}x^{-1/\beta - 1}dx$. Since $\beta > 1$, the result follows from the definition of PD$(1/\beta)$ once we note that the constant $\beta^{-1}$ in the intensity measure is a scaling parameter which gets cancelled when we take the ratio.
\end{proof}

The following theorem has been proved by Talagrand in \cite[Lemma 1.2.1, page 15]{sping} in the case of iid Gaussians. In Section \ref{triangle}, we follow more or less the same argument to prove it for Exponential random variables. The difference in the two arguments is merely due to the fact that the tails of Exponentials are fatter than those of Gaussians.

\begin{thm}\label{iidexppd}
Consider again the ordered Exponential random variables $(\eta_i(n), \; i=1,2,\ldots,n)$. For any $\beta > 1$ define the point process
\[
U^*_n= \left( \frac{\exp\{\beta \eta_i(n)\}}{\sum_{j=1}^n \exp\{\beta \eta_j(n)\}},\quad i=1,2,\ldots,n\right).
\]
Then, as $n$ tends to infinity, $U^*_n$ converges weakly to the PD($1/\beta$) law.
\end{thm}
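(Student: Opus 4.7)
The plan is to combine Lemmas \ref{orderedexp} and \ref{exppd} via a truncation argument. First observe that $U_n^*$ is invariant under translating every $\eta_i(n)$ by a common constant, so we may equivalently work with the centered point process $\pp_n^* = (\eta_i(n) - \log n)_{i=1}^n$, which by Lemma \ref{orderedexp} converges weakly in $(\mathcal{S}, \mathbf{d})$ to the decreasing atoms $(Y_i)_{i \ge 1}$ of a PPP with intensity $e^{-x}\,dx$. Writing
\[
\Phi(y_1, y_2, \ldots) := \left( \frac{e^{\beta y_i}}{\sum_j e^{\beta y_j}} \right)_{i \ge 1},
\]
we have $U_n^* = \Phi(\pp_n^*)$, and by Lemma \ref{exppd} the target distribution is $\Phi((Y_i)) \sim \mathrm{PD}(1/\beta)$. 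So it suffices to show $\Phi(\pp_n^*) \Rightarrow \Phi((Y_i))$.

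The main obstacle is that $\Phi$ is not continuous on $(\mathcal{S}, \mathbf{d})$: coordinates deep in the sequence can be perturbed by arbitrarily large amounts at tiny cost in the metric, yet shift the denominator drastically. I would overcome this by truncating at level $-K$. Let $\Phi_K$ be the analogue of $\Phi$ with both numerator and denominator restricted to coordinates $y_i \ge -K$. For each fixed $K$, $\Phi_K$ depends only on the finitely many coordinates above $-K$ and is continuous at every point of $\mathcal{S}$ no coordinate of which equals $-K$; since the PPP atoms have continuous distribution, $\Phi_K$ is almost surely continuous at $(Y_i)$. The continuous mapping theorem then yields $\Phi_K(\pp_n^*) \Rightarrow \Phi_K((Y_i))$ for every $K$.

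It remains to show that $\Phi_K \to \Phi$ in probability, uniformly in $n$ for the prelimit and pointwise for the limit. By independence and translation invariance,
\[
\e\!\!\!\sum_{j:\,\eta_j(n) - \log n < -K}\!\!\! e^{\beta(\eta_j(n)-\log n)} = n^{1-\beta}\int_0^{\log n - K}\!\!\! e^{(\beta-1)x}\,dx \;\longrightarrow\; \frac{e^{-(\beta-1)K}}{\beta - 1}
\]
as $n \to \infty$, where the hypothesis $\beta > 1$ is what keeps the integral bounded. By Campbell's formula, the analogous tail mass for the PPP limit also equals $e^{-(\beta-1)K}/(\beta - 1)$, and both expressions vanish as $K \to \infty$. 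Meanwhile, the denominator $\sum_j e^{\beta(\eta_j(n)-\log n)}$ is bounded away from zero in probability uniformly in $n$, because $\eta_1(n) - \log n$ is tight with a Gumbel-type limit and already the top term $e^{\beta(\eta_1(n) - \log n)}$ alone is bounded below. A standard weak-convergence-under-uniform-approximation argument then stitches these three ingredients --- weak convergence of each $\Phi_K$, uniform tail smallness of $\Phi - \Phi_K$ on both sides, and a uniform lower bound on the denominator --- into $U_n^* = \Phi(\pp_n^*) \Rightarrow \Phi((Y_i)) \sim \mathrm{PD}(1/\beta)$. The genuinely delicate step is the uniform-in-$n$ tail bound combined with the denominator lower bound; the rest is bookkeeping in the spirit of Talagrand's treatment in \cite{sping}, with the main adaptation owed to the heavier Exponential tails.
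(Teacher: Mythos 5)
Your proposal is correct and follows essentially the same route as the paper: both arguments are Talagrand's truncation scheme from \cite{sping}, adapted to Exponential rather than Gaussian tails. The paper fixes a cutoff $b$ and works with the truncated sum $S_b(n)$, showing via the same tail-mass computation ($\e(S(n)-S_b(n)) \approx e^{b(\beta-1)}/(\beta-1)$, finite only because $\beta>1$) and the same lower bound on $S(n)$ (from the Gumbel limit of $\eta_1(n)-\log n$) that the ratio is well approximated by one involving only finitely many top coordinates, which then converge to the PPP atoms and hence to the PD ratios; your $\Phi_K$ versus $\Phi$ decomposition, tail bound, and denominator bound are the same three ingredients in slightly different packaging.
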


The difficulty in proving Theorem \ref{iidexppd} lies in the fact that it does not follow from the convergence of the numerators of $U^*_n$ to the Poisson point process of Lemma \ref{orderedexp}. Although Poisson-Dirichlet can be obtained from the limiting sequence as described by Lemma \ref{exppd}, the convergence of the point processes of the extreme statistics is in the sense of distributions which does not guarantee convergence of its sum or the ratio. The hard work lies in estimating the denominator so that one can show the convergence of the sequence of ratios directly to Poisson-Dirichlet. The same problem will be faced later, greatly amplified, when we prove Theorem \ref{mainthmdrift}.

Weak convergence on the space $(\mcal{S},\mathbf{d})$ can sometimes be conveniently proved by suitable coupling thanks to the following version of Slutsky's theorem.

\begin{lem}\label{ppcoupling}
Suppose on the same probability space we have two sequences of point processes $X(n)=\{ X_i(n),\;i=1,2,\ldots\}$ and $X'(n)=\{ X_i'(n),\;i=1,2,\ldots\}$, $n\in \mathbb{N}$. Suppose $X(n)$ converges in law to $X$, and 
$X_i(n)-X'_i(n)$ goes to zero in probability for every $i$, as $n$ tends to infinity. Then $X'(n)$ also converges weakly to $X$.
\end{lem}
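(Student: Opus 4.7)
The plan is to reduce this to the classical metric-space version of Slutsky's theorem: if $X(n) \to X$ in law on a metric space and $\mathbf{d}(X(n), X'(n)) \to 0$ in probability, then $X'(n) \to X$ in law. So the entire task collapses to verifying that the random distances $\mathbf{d}(X(n), X'(n))$ tend to $0$ in probability, after which a standard uniformly continuous test function argument delivers the conclusion.

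First, I would show $\mathbf{d}(X(n), X'(n)) \to 0$ in probability. By definition,
\[
\mathbf{d}(X(n), X'(n)) = \sum_{i=1}^{\infty} \frac{\abs{X_i(n) - X_i'(n)} \wedge 1}{2^i}.
\]
Fix $\gke > 0$. Choose $N$ so large that $\sum_{i > N} 2^{-i} < \gke/2$; the tail of the series is then deterministically bounded by $\gke/2$. For the head, each of the finitely many terms $\frac{\abs{X_i(n) - X_i'(n)} \wedge 1}{2^i}$ tends to $0$ in probability by the hypothesis (the minimum with $1$ preserves convergence in probability, and a finite sum of probability-null sequences is probability-null). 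Hence $P(\mathbf{d}(X(n), X'(n)) > \gke) \to 0$, as desired.

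Next, I would apply the portmanteau characterization via bounded uniformly continuous functions. Let $f : \mcal{S} \to \rr$ be bounded and uniformly continuous, and fix $\gke > 0$. Choose $\delta > 0$ such that $\mathbf{d}(x,y) < \delta$ implies $\abs{f(x) - f(y)} < \gke$. Then
\[
\abs{f(X(n)) - f(X'(n))} \le \gke + 2\norm{f}_\infty \ind\{\mathbf{d}(X(n),X'(n)) \ge \delta\}.
\]
Taking expectations and passing to the limit, the indicator's expectation vanishes by the previous step, so $\limsup_n \abs{\e f(X(n)) - \e f(X'(n))} \le \gke$. Combined with $\e f(X(n)) \to \e f(X)$ and the fact that $\gke > 0$ was arbitrary, this gives $\e f(X'(n)) \to \e f(X)$, hence $X'(n) \Rightarrow X$ on $(\mcal{S}, \mathbf{d})$.

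There is no real obstacle here; the mild care needed is just the splitting into a finite head and a geometric tail to handle the infinite sum in the metric, which is exactly what the weighting $2^{-i}$ is designed for, and noting that coordinatewise convergence in probability is preserved under the truncation $\cdot \wedge 1$.
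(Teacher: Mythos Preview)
Your proposal is correct and follows essentially the same route as the paper: reduce to the metric-space Slutsky theorem by showing $\mathbf{d}(X(n),X'(n))\to 0$ in probability, then conclude. The only cosmetic difference is that the paper obtains this via dominated convergence on $\e\,\mathbf{d}(X(n),X'(n))$ (using the uniform bound $|X_i(n)-X_i'(n)|\wedge 1\le 1$ and summability of $2^{-i}$), whereas you use a head--tail split directly on the random series; and the paper simply cites Slutsky's theorem from Resnick rather than rederiving it via portmanteau.
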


\begin{proof}
The result follows from Slutsky's Theorem (Theorem 3.4 in \cite{resnick}) once we show that ${\bf d}(X(n),X'(n)) \stackrel{P}{\rightarrow} 0$. To achieve this, note that the random variables $\abs{X_i(n) - X_i'(n)}\wedge 1 \le 1$ and converges to zero in probability by our hypothesis. A simple application of dominated convergence theorem imply that
\[
\lim_{n\rightarrow \infty}\e \;{\bf d}(X(n),X'(n)) = \lim_{n\rightarrow \infty}\e \sum_{i=1}^{\infty} \frac{\abs{X_i(n)-X_i'(n)}\wedge 1}{2^i} =0.
\]
This shows that ${\bf d}(X(n),X'(n))$ converges to zero in probability and proves the lemma.
\end{proof}

Now, the support of the PD law is over sequences which have a special structure, namely, all the elements are non-negative and they add up to one. It will be easier for our arguments if we now restrict our sequence space to exactly such sequence with an apparently stronger metric. Let $\mcal{S}'$ be the set of all sequences $(x_i)$ such that $x_1 \ge x_2 \ge \ldots \ge 0$ for all $i$ and $\sum_i x_i \le 1$. For any two sequences $(x_i)$ and $(y_i)$ we also consider the $\lone$-norm between them
\begin{equation}\label{whatisdp}
{\bf d}'(x,y)= \sum_{i=1}^{\infty}\abs{x_i-y_i}.
\end{equation}
Weak convergence of measures on $(\mcal{S}',{\bf d}')$ is determined by functions which are continuous with respect to the $\lone$-norm. In fact, we will, without loss of generality, consider functions which are Lipschitz with Lipschitz coefficient not more than one. That is to say, a sequence of probability measures $\mu_n$ on $(\mcal{S}',{\bf d}')$ converges weakly to another probability measure $\mu$ on the same space if and only if
\begin{equation}\label{wksprime}
\int f d\mu_n \rightarrow \int f d \mu,
\end{equation}
for all $f$ which are Lipschitz with Lipschitz coefficients not more than one. 

Finally a note on the notation used in the following section. The Poisson-Dirchlet law is a probability law on $(\mcal{S}',{\bf d}')$. For any test function $f$, we will denote its expectation with respect to any PD law by PD($f$). The parameter of the PD law will be obvious from the context.

\section{Proofs}\label{triangle}

\subsection{Proof of Theorem \ref{iidexppd}}

Let us recall the set-up: $\eta_1,\eta_2,\ldots$ is a sequence of iid Exponential one random variables, and $\eta_1(n) \ge \eta_2(n) \ge \ldots \ge \eta_n(n)$ be the ordered values of the first $n$ random variables. For any $\beta > 1$, we want to find the limiting law of the point process 
\[
U_n^*= \left( \frac{\exp \left\{\beta \eta_i(n)\right\}}{\sum_{j=1}^n \exp \left\{\beta \eta_j(n)\right\}},\quad i=1,2,\ldots,n \right). 
\]

The first step is to fix a real number $b$, and define
\begin{equation}
\begin{split}
h_i(n):=\eta_i(n) - \log n,&\qquad u_i(n):= e^{\beta h_i(n)},\\
S(n):= \sum_{i=1}^n u_i(n),& \qquad S_b(n):= \sum_{i=1}^n u_i(n)1\{h_i(n) \ge b \}.
\end{split}
\end{equation}
Let us denote $w_i(n)=u_i(n)/S(n)$, $i=1,2,\ldots,n$, which are the coordinates of the point process $U_n^*$. We also define another point process depending on $b$:
\begin{equation}\label{cutcut}
w_i(b,n) = \begin{cases}
0,&\quad \text{if}\; h_i(n) < b,\\
u_i(n)/S_b(n),&\quad \text{if}\; h_i(n) \ge b.
\end{cases}
\end{equation}
The following lemma, exactly as in \cite[Lemma 1.2.4, pg 17]{sping}, estimates the difference between the two point processes.

\begin{lem}
\[
\sum_{i=1}^n \abs{w_i(n) - w_i(b,n)} = 2\frac{S(n)-S_b(n)}{S(n)}.
\]
\end{lem}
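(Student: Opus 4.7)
The plan is to split the sum according to the indicator defining $w_i(b,n)$ and compute each piece directly; no probability is needed here, only the algebraic relationship between $S(n)$ and $S_b(n)$.

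First I would partition $\{1,\ldots,n\}$ into $I_{<} = \{i : h_i(n) < b\}$ and $I_{\ge} = \{i : h_i(n) \ge b\}$. On $I_{<}$, the definition \eqref{cutcut} gives $w_i(b,n) = 0$, so $\abs{w_i(n) - w_i(b,n)} = u_i(n)/S(n)$, and summing over $I_{<}$ produces exactly $(S(n) - S_b(n))/S(n)$ because $\sum_{i \in I_{<}} u_i(n) = S(n) - S_b(n)$.

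On $I_{\ge}$, both $w_i(n)$ and $w_i(b,n)$ share the common numerator $u_i(n)$, so
\[
\abs{w_i(n) - w_i(b,n)} = u_i(n)\left(\frac{1}{S_b(n)} - \frac{1}{S(n)}\right) = u_i(n)\,\frac{S(n) - S_b(n)}{S(n) S_b(n)},
\]
where the sign is unambiguous since $S_b(n) \le S(n)$. Summing over $I_{\ge}$ uses $\sum_{i \in I_{\ge}} u_i(n) = S_b(n)$, which cancels the $S_b(n)$ in the denominator and yields again $(S(n) - S_b(n))/S(n)$.

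Adding the two contributions gives $2(S(n) - S_b(n))/S(n)$, which is the claimed identity. There is essentially no obstacle; the only thing to be careful about is keeping track of the sign when comparing $1/S(n)$ and $1/S_b(n)$, which is immediate from $S_b(n) \le S(n)$.
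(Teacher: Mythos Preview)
Your argument is correct and is exactly the natural direct computation: split by the indicator, sum each piece, and use $S_b(n)\le S(n)$ for the sign. The paper itself does not give a proof but cites Talagrand \cite[Lemma 1.2.4]{sping}, whose argument is identical to yours.
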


\begin{lem}
Given $\epsilon > 0$, we can find $b \in \rr$ such that
\begin{equation}\label{somebound}
\limsup_{n \rightarrow \infty}P\left(\frac{S(n) - S_b(n)}{S(n)} \ge \epsilon \right) \le \epsilon.
\end{equation}
\end{lem}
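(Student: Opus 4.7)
The plan is to control the numerator $S(n)-S_b(n)$ in expectation via Markov's inequality, while bounding the denominator $S(n)$ from below using the largest order statistic, whose centered version $h_1(n) = \eta_1(n) - \log n$ converges in distribution to a Gumbel law. Since $\beta > 1$, this is exactly the regime in which the sum $S(n)$ is dominated by the extreme order statistics, so the contribution from atoms below any fixed threshold $b$ becomes negligible.

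First I would compute $\mathrm{E}[S(n) - S_b(n)]$ directly. Because the sum is invariant under reordering, we may rewrite it in terms of the unordered iid Exponentials:
\[
S(n) - S_b(n) = n^{-\beta}\sum_{i=1}^n e^{\beta \eta_i}\,\ind\{\eta_i < b + \log n\}.
\]
Taking expectations and using $\beta > 1$ to make the integral finite,
\[
\mathrm{E}[S(n)-S_b(n)] = n^{1-\beta}\int_0^{b+\log n} e^{(\beta-1)x}\,\dx x = \frac{e^{(\beta-1)b} - n^{1-\beta}}{\beta - 1}\xrightarrow{n\to\infty}\frac{e^{(\beta-1)b}}{\beta - 1},
\]
which tends to $0$ as $b\to -\infty$.

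Second, I would use the classical extreme-value result that $h_1(n) = \eta_1(n) - \log n$ converges in law to a Gumbel variable. Hence for any fixed $\epsilon > 0$ there is $M = M(\epsilon) \in \rr$ such that $P(h_1(n) < M) \le \epsilon/2$ for all $n$ large enough. Since $S(n) \ge u_1(n) = e^{\beta h_1(n)}$, this gives $P(S(n) < e^{\beta M}) \le \epsilon/2$ for large $n$.

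Finally, I would combine the two estimates by splitting on the event $\{S(n) \ge e^{\beta M}\}$:
\[
P\!\left(\frac{S(n)-S_b(n)}{S(n)} \ge \epsilon\right) \le P\bigl(S(n)-S_b(n) \ge \epsilon\, e^{\beta M}\bigr) + P\bigl(S(n) < e^{\beta M}\bigr) \le \frac{\mathrm{E}[S(n)-S_b(n)]}{\epsilon\, e^{\beta M}} + \frac{\epsilon}{2}.
\]
Taking $\limsup_n$ gives $\frac{e^{(\beta-1)b}}{(\beta-1)\epsilon\, e^{\beta M}} + \frac{\epsilon}{2}$, and choosing $b$ sufficiently negative (depending on $\epsilon$ and $M$) drives the first term below $\epsilon/2$, yielding the claimed bound.

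There is no genuine obstacle here; the only point to keep track of is that $\beta > 1$ is precisely what makes $\int_0^{b+\log n} e^{(\beta-1)x}\dx x$ grow like $n^{\beta-1}$ and thus cancel the factor $n^{1-\beta}$ to produce a finite, $b$-controllable limit. If one wanted to avoid invoking the Gumbel limit explicitly, the same bound on $P(S(n) < e^{\beta M})$ can be obtained directly from $P(\eta_1(n) < M + \log n) = (1-e^{-M-\log n})^n = (1 - e^{-M}/n)^n \to \exp(-e^{-M})$, which is $\le \epsilon/2$ once $M$ is sufficiently negative.
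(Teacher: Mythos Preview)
Your proposal is correct and follows essentially the same approach as the paper: bound $S(n)$ from below with high probability via the maximum $u_1(n)=e^{\beta h_1(n)}$ (the paper uses the equivalent implication $\{S(n)\le e^{\beta x}\}\subset\{h_1(n)\le x\}$ and the same computation $(1-e^{-x}/n)^n\to\exp(-e^{-x})$), and control $S(n)-S_b(n)$ in expectation by the identical calculation $\e[S(n)-S_b(n)]=\frac{e^{(\beta-1)b}}{\beta-1}-\frac{n^{1-\beta}}{\beta-1}$, then combine via Markov. The only cosmetic difference is that the paper names the lower-bound level $\delta$ rather than $e^{\beta M}$.
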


\begin{proof} Note that if $S(n) \le \exp \beta x$ for some $x\in \rr$, then $\eta_i -\log n \le x$ for all $i=1,2,\ldots,n$. The probability of this event is $\left( 1 - e^{-x}/n \right)^n$, which tends to $\exp(-\exp (-x))$ as $n$ tends to infinity. This proves that for any $\epsilon$, there is a $\delta > 0$, such that for all large enough $n$, one has $P(S(n) \le \delta) \le  \epsilon / 2$.

We will show that given $\epsilon, \delta$ as above, there is a $b$ such that for all $n$ large enough
\begin{equation}\label{cauchy}
P\left( S(n) - S_b(n) \ge \epsilon\delta \right) \le \frac{\epsilon}{2}.
\end{equation}
It follows for that choice of $b$ that for all $n$ large enough we have
\[
P\left(\frac{S(n) - S_b(n)}{S(n)} \ge \epsilon \right) \le \epsilon.
\]
In other words, \eqref{somebound} holds.

To show \eqref{cauchy}, note that by symmetry
\[
\begin{split}
\e(S(n) - S_b(n)) &= n \e\left(e^{\beta (\eta_1-\log n)}1\{\eta_1-\log n \le b \} \right)\\
&= n^{1-\beta}\int_0^{\log n+b} e^{\beta x}e^{-x}dx\\
&= \frac{1}{n^{\beta-1}(\beta-1)}\left[ n^{\beta -1}\exp\left( (\beta-1)b  \right) - 1 \right]\\
&= \frac{e^{b(\beta-1)}}{\beta -1} - \frac{1}{n^{\beta-1}(\beta-1)}.
\end{split}
\]
Since $\beta > 1$, we can choose $b$ to be a large negative number such that for all $n$ sufficiently large $\e(S(n) - S_b(n)) \le \epsilon^2\delta/2$. Now \eqref{cauchy} follows by Markov's inequality. 
\end{proof}

\begin{proof}[Proof of Theorem \ref{iidexppd}] The proof now follows exactly as in Talagrand's proof of Theorem 1.2.1 in \cite[page 19]{sping}. The basic idea being that from the last lemma it follows that the denominator in $U_n^*$ can be approximated by only about a Poisson many variables no matter what $n$ is. The rest follows by pointwise convergence of these finitely many coordinates to the corresponding atoms of the Poisson point process which in turn approximates the Poisson-Dirichlet ratios. We skip the details.
\end{proof}

\subsection{Proof of Theorem \ref{mainthmdrift}}
The proof of Theorem \ref{mainthmdrift} comes in several steps. The main idea is the following: compare the ordered process in stationarity with the point process of the order statistics of Exponential distribution as considered in Theorem \ref{iidexppd}. Condition \eqref{conddel2} can be interpreted as saying that the top few atoms of both point processes have the same joint distribution up to a shift. The PD convergence requires exponentiation and taking a ratio as in Theorem \ref{iidexppd}. With the help of condition \eqref{conddel} we will show that when we exponentiate, the top few atoms gets so large compared to the rest that the denominator of the ratio in \eqref{pprev} is basically a finite sum. This is the main technical difficulty in the proof which requires computations of suitable estimates. The PD convergence then follows from Theorem \ref{iidexppd}. 

The first step is the following lemma.

\begin{lem}\label{mainthm}
Suppose a triangular array of constants $(\beta_i(n),\; 1\le i\le n-1)$, $n\ge 2$, satisfies the following two conditions 
\begin{equation}\label{limiti}
\limsup_{n\rightarrow \infty}\left[\max_{1\le i\le n-1}\frac{\beta_{i}(n)}{i}\right] \le 1,\quad \text{and}\quad \lim_{n\rightarrow \infty} \beta_{i}(n) = i,\quad i=1,2,3,\ldots
\end{equation} 
Corresponding to this array of constants, let us take an array of Exponential random variables $(\xi_1(n), \xi_2(n),\ldots,\xi_{n-1}(n))$, $n\in \mathbb{N}$, all independent with $\e(\xi_i(n))=1/\beta_i(n)$. For each $n\in \mathbb{N}$ and $\beta > 0$, let $U_n(\beta)$ be the point process with $n$ coordinates given by
\[
\begin{split}
U_n(\beta)(j)&=\frac{\exp \left(\beta \sum_{i=j}^{n-1} \xi_i(n)\right)}{1 + \sum_{k=1}^{n-1} \exp \left(\beta \sum_{i=k}^{n-1} \xi_i(n)\right)  },\quad j=1,2,\ldots,n-1,\quad \text{and}\\
U_n(\beta)(n)&=\frac{1}{1 + \sum_{k=1}^{n-1} \exp \left(\beta \sum_{i=k}^{n-1} \xi_i(n)\right)}.
\end{split}
\]
Then
\begin{itemize}
\item[(i)] For any $\beta > 1$, the sequence $U_n(\beta)$ converges in distribution to the Poisson-Dirchlet law PD$(1/\beta)$ as $n \rightarrow \infty$.
\item[(ii)] For any positive sequence $\{\delta_n\}$ such that $\delta_n \le 1$ and $\lim_{n\rightarrow \infty}\delta_n=\beta \in [0,1]$, the point process $U_n(\delta_n)$ converges in law to unit mass on the sequence of all zeroes. In particular, $U_n(\beta)$ converges to the zero sequence when all the $\delta_n$'s are identically equal to $\beta$ for some $\beta \le 1$.
\end{itemize}
\end{lem}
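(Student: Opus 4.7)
My plan is to couple the entire triangular array onto one probability space using iid standard exponentials $(Z_i)_{i\geq 1}$: set $\xi_i(n):=Z_i/\beta_i(n)\sim\mathrm{Exp}(\beta_i(n))$ and $\xi_i^\ast:=Z_i/i\sim\mathrm{Exp}(i)$. The pointwise hypothesis $\beta_i(n)\to i$ immediately gives $\xi_i(n)\to\xi_i^\ast$ a.s.\ for every fixed $i$. Writing $T_k(n):=\sum_{i=1}^{k-1}\xi_i(n)$ and $T_k^\ast:=\sum_{i=1}^{k-1}\xi_i^\ast$, one has $S_j(n)=T_n(n)-T_j(n)$, and dividing numerator and denominator of $U_n(\beta)(j)$ by $\exp(\beta T_n(n))$ gives the convenient form
\[
U_n(\beta)(j)\;=\;\frac{\exp(-\beta T_j(n))}{\exp(-\beta T_n(n))+\sum_{k=1}^{n-1}\exp(-\beta T_k(n))}.
\]
Because $T_n(n)\geq T_K(n)\to T_K^\ast\to\infty$ as $K\to\infty$, we have $T_n(n)\to\infty$ a.s.\ in every regime, so the first summand in the denominator vanishes. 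Let $U_n^\ast(\beta)$ denote the analogous process built from the $\xi_i^\ast$'s; by R\'enyi's decomposition of the order statistics of $n$ iid standard exponentials into independent exponential gaps, $U_n^\ast(\beta)$ is precisely the process of Theorem \ref{iidexppd}.

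For part (i), I will fix $\epsilon>0$ so small that $\beta/(1+\epsilon)>1$. The $\limsup$ hypothesis gives $\beta_i(n)\leq i(1+\epsilon)$ uniformly in $i$ for all $n\geq N(\epsilon)$, hence $\xi_i(n)\geq\xi_i^\ast/(1+\epsilon)$ and
\[
\exp(-\beta T_k(n))\;\leq\;\exp\bigl(-\tfrac{\beta}{1+\epsilon}\,T_k^\ast\bigr),\qquad n\geq N(\epsilon),\ k\geq 1.
\]
A direct MGF computation gives $\e\exp(-\tfrac{\beta}{1+\epsilon}T_k^\ast)=\Gamma(1+\tfrac{\beta}{1+\epsilon})\Gamma(k)/\Gamma(k+\tfrac{\beta}{1+\epsilon})\sim k^{-\beta/(1+\epsilon)}$, and since $\beta/(1+\epsilon)>1$ this is summable, making the envelope a.s.\ summable in $k$. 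For each fixed $k$, $T_k(n)\to T_k^\ast$ a.s.\ as a finite sum of a.s.-convergent terms; a sample-path application of dominated convergence then yields
\[
\sum_{k=1}^{n-1}\exp(-\beta T_k(n))\;\xrightarrow{\text{a.s.}}\;\sum_{k=1}^\infty\exp(-\beta T_k^\ast),
\]
and exactly the same computation applied with $\beta_i(n)\equiv i$ shows $U_n^\ast(\beta)(j)$ converges a.s.\ to the identical limit. Consequently $U_n(\beta)(j)-U_n^\ast(\beta)(j)\to 0$ a.s.\ for every fixed $j$. Since $U_n^\ast(\beta)\to\mathrm{PD}(1/\beta)$ in law by Theorem \ref{iidexppd}, Lemma \ref{ppcoupling} promotes this coordinate-wise closeness to $U_n(\beta)\to\mathrm{PD}(1/\beta)$ in law.

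For part (ii), each $U_n(\delta_n)$ is a decreasing probability vector, so it suffices to show $U_n(\delta_n)(1)\to 0$ in probability; all other coordinates are dominated by the first. The displayed formula (with $T_1(n)=0$) gives $U_n(\delta_n)(1)^{-1}\geq\sum_{k=1}^{n-1}\exp(-\delta_n T_k(n))$, and I must show this sum diverges in probability. For any fixed $K$, $\sum_{k=1}^K\exp(-\delta_n T_k(n))\to\sum_{k=1}^K\exp(-\beta T_k^\ast)$ a.s.\ (a finite sum, using $\delta_n\to\beta$ and $T_k(n)\to T_k^\ast$). Kolmogorov's two-series theorem applied to the independent mean-zero variables $(Z_i-1)/i$, whose variances sum to $\pi^2/6$, yields that $T_k^\ast-H_{k-1}$ converges a.s., so $T_k^\ast=\log k+O(1)$ a.s.\ and $\exp(-\beta T_k^\ast)\asymp k^{-\beta}$; because $\beta\leq 1$, $\sum_{k=1}^\infty\exp(-\beta T_k^\ast)=\infty$ a.s. Given any $M>0$ and $\epsilon>0$, choose $K$ with $P\bigl(\sum_{k=1}^K\exp(-\beta T_k^\ast)>M\bigr)\geq 1-\epsilon$; the a.s.\ convergence of the $K$-truncated sum then gives $P(U_n(\delta_n)(1)\leq 1/M)\geq 1-2\epsilon$ for all sufficiently large $n$, which proves (ii). The principal technical obstacle throughout is the dominated-convergence step in (i): the one-sided $\limsup$ hypothesis provides no lower bound whatsoever on the rates $\beta_i(n)$, so the summable envelope for $\exp(-\beta T_k(n))$ that is valid uniformly in $n$ must be extracted solely from the upper bound $\beta_i(n)\leq i(1+\epsilon)$ combined with the slack afforded by the strict inequality $\beta>1$.
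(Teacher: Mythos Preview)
Your argument is correct for both parts, and it is genuinely different from the paper's.

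For part (i), the paper constructs, for each cut-off level $b\in\mathbb{N}$, a hybrid process $\widetilde{U}_n(b)$ in which only the top $b$ spacings come from the array $(\xi_i(n))$ and the remaining ones are the exponential gaps from the R\'enyi representation of iid order statistics. It then shows (Lemma \ref{convergeetilde}) that $\widetilde{U}_n(b)\to\mathrm{PD}(1/\beta)$ for each fixed $b$, and separately (Lemmas \ref{approxb} and the one following it) that $\e\,{\bf d}'(U_n,\widetilde{U}_n(b))\le C\,b^{1-\sqrt{\beta}}$, concluding by sending $b\to\infty$. You bypass this two-scale approximation entirely: your coupling $\xi_i(n)=Z_i/\beta_i(n)$ together with the one-sided bound $\beta_i(n)\le(1+\epsilon)i$ yields the uniform envelope $\exp(-\beta T_k(n))\le\exp\bigl(-\tfrac{\beta}{1+\epsilon}T_k^\ast\bigr)$, which is a.s.\ summable because its expectation behaves like $k^{-\beta/(1+\epsilon)}$ with $\beta/(1+\epsilon)>1$. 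A single pathwise dominated-convergence step then gives a.s.\ convergence of the full denominator, and hence of every coordinate. This is shorter and more transparent; the paper's route, by contrast, yields an explicit $\ell^1$-rate in $b$ but at the cost of the extra interpolation.

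For part (ii), the paper deploys a concentration lemma (Lemma \ref{phase}) based on Doob's $L^p$ inequality and the Poincar\'e inequality for exponentials, reducing the problem to showing that a deterministic proxy $\bar{\mu}_{K,n}$ tends to zero. Your argument is again more direct: you lower-bound $\mu_1(n)^{-1}$ by a fixed $K$-term partial sum, pass to the a.s.\ limit $\sum_{k\le K}\exp(-\beta T_k^\ast)$, and invoke the almost-sure asymptotics $T_k^\ast=\log k+O(1)$ (from Kolmogorov's one-series theorem) to see that the full series diverges when $\beta\le 1$. The only thing your route sacrifices is quantitative control: the paper's Lemma \ref{phase} is reused verbatim to obtain the polynomial and logarithmic rates in Theorem \ref{phasegrav}, which your soft argument does not deliver.
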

Let us first complete the proof of Theorem \ref{mainthmdrift} using the above result. First suppose that $\eta > 0$ in \eqref{conddel2} and \eqref{conddel}. In the notation used in Lemma \ref{mainthm}, we define
\begin{equation}\label{betainthm}
\quad\beta_i(n)= \frac{1}{\eta}\sum_{j=1}^i\left( \bar{\delta}(n)-\delta_j(n)\right), \ \ i =1,\ldots,n-1.
\end{equation}
Note that each $\beta_i(n) > 0$ by \eqref{conal}.
Thus, by assumptions \eqref{conddel2} and \eqref{conddel}, we get
\begin{equation}\label{basicineq}
\begin{split}
\limsup_n\max_{1\le i\le n-1}\frac{\beta_{i}(n)}{i}&=\limsup_n\left(\max_{1\le i\le n-1}\frac{1}{i}\sum_{j=1}^i\frac{ \bar{\delta}(n)-{\delta_j}(n)}{ \eta}\right)\le 1.
\end{split}
\end{equation}
Also, clearly
\[
\lim_{n\rightarrow \infty}{\beta_i(n)}= \lim_{n\rightarrow \infty}\sum_{j=1}^i\frac{ \bar{\delta}(n)-{\delta_j}(n)}{\eta}=i.
\]
Thus the array $\{\beta_i(n)\}$ satisfies both the conditions in \eqref{limiti}. Now, by Theorem~\ref{theoremN}, for the model with $n$ particles, the joint law of the $n$ spacings $(X_{(i)}(t)-X_{(i+1)}(t),\;i=1,2,\ldots,n-1)$ under the stationary distribution is the same as that of a vector of independent Exponentials $(Y_1(n), Y_2(n),\ldots,Y_{n-1}(n))$ such that $Y_i(n)$ is Exponential with rate 
\[
2\sum_{j=1}^i(\bar{\delta}(n)-\delta_j(n))=2\eta\beta_i(n),
\]
which is the joint law of $(\xi_i(n)/2\eta, \ldots, i=1,2,\ldots,n-1)$.
Thus, the decreasingly arranged market weights in the $n$ particle system are in law given by
\begin{align*}
\mu_i(n)&= \frac{\exp(X_{(i)}(n)-X_{(n)}(n))}{\sum_{j=1}^{n} \exp(X_{(j)}(n)-X_{(n)}(n))}\\
&\stackrel{\mcal{L}}{=}  \frac{\exp \left\{\frac{1}{2\eta}\sum_{j=i}^{n-1} \xi_j(n)\right\}}{\sum_{k=1}^{n} \exp \left\{ \frac{1}{2\eta}\sum_{j=k}^{n-1} \xi_j(n)\right\} }, \quad i=1,2,\ldots,n.
\end{align*}

From Lemma \ref{mainthm} it is immediate that if $\eta\in(0,1/2)$, the point process of market weights converges weakly to the Poisson-Dirichlet law PD($2\eta$) as $n$ tends to infinity, and if $\eta \ge 1/2$, the point process converges to zero. This proves conclusions (i) and (ii) in Theorem \ref{mainthmdrift}.  

Let us now prove part (iii) of Theorem \ref{mainthmdrift}. Assume that $\eta=0$. For the $n$ particle system, the largest market weight, $\mu_1(n)$, is given by
\[
\frac{e^{X_{(1)}(n)}}{\sum_{i=1}^n e^{X_{(i)}(n)}}= \frac{1}{1+\sum_{i=1}^{n-1} \exp\{-\sum_{j=1}^i Y_{j}(n)\}},
\]
where $Y_j(n)$ is Exponential with rate $2\alpha_i(n)$, where $\alpha_i(n)=\sum_{j=1}^i(\bar{\delta}(n)-\delta_j(n))$. It is obvious that $\mu_1(n)\le 1$. We will show that  $\mu_1(n)^{-1} \rightarrow 1$  in probability, which implies that $\mu_1(n) \rightarrow 1$ in probability too. Since all the market weights are nonnegative and add up to one, this forces the rest of them to converge to zero, and our theorem is proved.

Now, recall that if $Y\sim\text{Exp}(\lambda)$, then $\e e^{-Y}$ is $(1+1/\lambda)^{-1}$. Thus
\begin{equation}\label{somebnd}
\begin{split}
1\le \e\left(1/\mu_1(n)\right)&= 1 + \sum_{i=1}^{n-1}\prod_{j=1}^{i}\e\left(e^{-Y_{j}(n)} \right)\\
&= 1 + \sum_{i=1}^{n-1}\prod_{j=1}^{i}\frac{1}{1+1/2\alpha_{j}(n)}.
\end{split}
\end{equation}
By assumptions \eqref{conddel2} and \eqref{conddel}, there exists $n_0\in\mathbb{N}$ such that
\[
\max_{1\le i \le n}(\bar{\delta}(n)-\delta_i(n)) \le 2(\bar{\delta}(n) - \delta_1(n)),\quad \forall\; n\ge n_0.
\]
If we denote $c_n=\bar{\delta}(n) - \delta_1(n)$, it follows from the definition of $\alpha_j(n)$ that for all $n\ge n_0$ we have $\alpha_j(n)\le 2j c_n$, and hence
\[
\sum_{i=1}^{n-1}\prod_{j=1}^{i}\frac{1}{1+1/2\alpha_j(n)}\le \sum_{i=1}^{n-1}\prod_{j=1}^{i}\frac{1}{1+1/4jc_n}.
\]
Let 
\[
q_i(n)= \begin{cases}
\prod_{j=1}^{i}\left(1+1/4jc_n\right)^{-1},&\quad i\le n,\\
0,&\quad \text{otherwise}.
\end{cases}
\]
Without loss of generality, we can assume, by \eqref{conddel2}, that $c_n \le 1/8$ for all $n \ge n_0$. Thus
\[
q_i(n) \le \prod_{j=1}^{i}\left(1+2/j\right)^{-1}\le \prod_{j=1}^i \frac{j}{j+2}= 2\frac{ i!}{(i+2)!}\le Ci^{-2},
\]
where $C$ is some positive constant. Thus, $q_i(n)$ is dominated by a summable series for all $n \ge n_0$. Also note that, since $c_n \rightarrow 0$ by \eqref{conddel2}, we get $\lim_{n\rightarrow \infty}q_i(n)=0,\; \forall\; i$. Hence we can apply the dominated convergence theorem to get
\[
\limsup_{n\rightarrow\infty}\left\lvert\e\left(1/\mu_1(n)\right)-1\right\rvert \le \lim_{n\rightarrow \infty} \sum_{i=1}^{\infty}q_i(n) =0.
\]
Thus, we have shown that $\mu_1^{-1}(n)$ converges to one in probability, and this completes the proof of Theorem \ref{mainthmdrift}.
\vskip.2in

Now let us begin the proof of Lemma \ref{mainthm}. On a possibly enlarged sample space, consider an independent sequence $\{\eta_1,\eta_2,\ldots\}$ of iid Exponential random variables with mean one. Let $\eta_1(n) \ge \eta_2(n) \ge \ldots \ge \eta_{n}(n)$ denote the first $n$ random variables $\{\eta_1,\ldots,\eta_{n}\}$ arranged in decreasing order. As considered in Lemma \ref{orderedexp}, let $\pp^*_n$ denote the point process given by 
\begin{equation}\label{ppnstar}
\pp^*_n= \left(\eta_i(n)-\log n,\; i=1,2,\ldots,n\right), \quad n\in \mathbb{N}.
\end{equation}
For any $b \in \mathbb{N}$, define a new sequence of points by
\begin{equation}\label{whatisy}
Y_j'(n) = \begin{cases}
\eta_j(n),&\quad j > b\\
\eta_{b+1}(n) + \sum_{i=j}^{b} \xi_i(n),&\quad 1\le j \le b. 
\end{cases} 
\end{equation}
Let $\tilde{\pp}_n(b)$ be the new point process $\tilde{\pp}_n(b) = \left(Y_i'(n)-\log n\right),\;i=1,2,\ldots$. We prove below that, for every $b\in \mathbb{N}$, as $n$ tends to infinity, the law of $\tilde{\pp}_n(b)$ converges to the same limiting law as that of $\pp^*_n$ as described in Lemma \ref{orderedexp}.

\begin{lem}\label{convergeetilde}
For every fixed $b \in\mathbb{N}$, as $n$ tends to infinity, the random point process $\tilde{\pp}_n(b)$ converges to a Poisson point process on the line with intensity $d\mu^*=e^{-x}dx$. 

Moreover, for every $\beta > 1$, the normalized Exponential point process
\begin{equation}\label{whatisutilde}
\widetilde{U}_n(b)=\left\{\frac{\exp\{\beta Y_i(n)\}}{\sum_{j=1}^n \exp\{\beta Y_j(n)\}},\quad i=1,2,\ldots,n\right\}
\end{equation}
converges in law to the Poisson-Dirichlet distribution with parameter $1/\beta$.
\end{lem}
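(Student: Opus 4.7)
\textbf{Proof plan for Lemma \ref{convergeetilde}.} The plan is to realize $\tilde{\pp}_n(b)$ and $\pp_n^*$ on a common probability space so that, for each fixed $b$, their corresponding coordinates differ by quantities tending to zero in probability. Once this coupling is in place, the first claim follows from Lemma \ref{orderedexp} via Lemma \ref{ppcoupling}, and the second follows from Theorem \ref{iidexppd} together with a Slutsky-type argument in $(\mcal{S}',{\bf d}')$.

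For the coupling, use R\'enyi's representation of the decreasing order statistics of iid Exp$(1)$ variables: the spacings $\eta_j(n)-\eta_{j+1}(n)$ for $1\le j\le n-1$ are mutually independent with $\eta_j(n)-\eta_{j+1}(n)\sim\mathrm{Exp}(j)$, so we may write $\eta_j(n)-\eta_{j+1}(n)=Z_j(n)/j$ with $Z_j(n)$ iid Exp$(1)$. On the enlarged sample space, redefine
\[
\xi_i(n) := \frac{Z_i(n)}{\beta_i(n)}, \qquad 1\le i\le b,
\]
which preserves the joint law of $(\xi_i(n))_{i\le b}$ required in the statement of Lemma~\ref{mainthm}. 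Under this coupling, $Y_j'(n)=\eta_j(n)$ for $j>b$, while for $1\le j\le b$,
\[
Y_j'(n) - \eta_j(n) \;=\; \sum_{i=j}^b Z_i(n)\left(\frac{1}{\beta_i(n)} - \frac{1}{i}\right),
\]
which tends to zero in probability by \eqref{limiti}, since only finitely many terms appear. Applied to $\eta_j(n)-\log n$ versus $Y_j'(n)-\log n$, Lemma \ref{ppcoupling} together with Lemma \ref{orderedexp} yields the first claim.

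For the convergence of $\widetilde{U}_n(b)$, set $S_n:=\sum_{j=1}^n e^{\beta\eta_j(n)}$ and $S_n':=\sum_{j=1}^n e^{\beta Y_j'(n)}$. Because only the top $b$ atoms are perturbed,
\[
\frac{S_n'}{S_n} - 1 \;=\; \sum_{j=1}^b \frac{e^{\beta\eta_j(n)}}{S_n}\left(e^{\beta(Y_j'(n)-\eta_j(n))}-1\right),
\]
a finite sum of products of nonnegative terms bounded by $1$ and terms tending to $0$ in probability; hence $S_n'/S_n\to 1$ in probability. Expanding $\mathbf{d}'(\widetilde{U}_n(b),U_n^*)$ and splitting the sum at $j=b$, the tail $j>b$ contributes at most $|S_n/S_n'-1|$, while the finitely many terms $j\le b$ each vanish in probability by the same reasoning. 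Thus $\mathbf{d}'(\widetilde{U}_n(b),U_n^*)\to 0$ in probability, and combining this with the convergence $U_n^*\Rightarrow\mathrm{PD}(1/\beta)$ from Theorem \ref{iidexppd} via a Slutsky argument in $(\mcal{S}',{\bf d}')$ yields $\widetilde{U}_n(b)\Rightarrow\mathrm{PD}(1/\beta)$. The main technical ingredient is the control of $S_n'/S_n$, which is elementary here because the perturbation is supported on a fixed finite index set; the analog of this estimate for Theorem \ref{mainthmdrift}, where every coordinate is perturbed, is where the real work will lie.
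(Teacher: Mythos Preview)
Your proposal is correct and follows essentially the same route as the paper: couple $\tilde\pp_n(b)$ to $\pp_n^*$ via the R\'enyi representation so that only the top $b$ coordinates differ, invoke Lemma~\ref{ppcoupling} together with Lemma~\ref{orderedexp} for the first claim, and then use that only finitely many exponentiated terms are perturbed to transfer the PD convergence of $U_n^*$ from Theorem~\ref{iidexppd} to $\widetilde U_n(b)$. Your treatment of the second part, via the explicit identity for $S_n'/S_n-1$ and the $\mathbf{d}'$-splitting at $j=b$, is in fact a bit more careful than the paper's, which asserts coordinatewise convergence of $e^{\beta Y_i'(n)}-e^{\beta Y_i^*(n)}$ to zero (a statement that is literally true only after centering by $\log n$); your ratio formulation sidesteps that nicety cleanly.
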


\begin{proof}
Our main tool will be to apply Lemma \ref{ppcoupling}. Let us recall the \emph{R\'enyi representation} (\cite[page 114]{resnick}): 
\begin{equation}\label{renyi}
\eta_j(n) \stackrel{\mcal{L}}{=} Y_j^*(n):=\sum_{i=j}^{n} {\eta_i}/{i}, \quad 1\le j\le n,\quad n\in \mathbb{N},
\end{equation}
where $\stackrel{\mcal{L}}{=}$ denotes equality in law. Also, trivially, the joint distribution of the sequence $\{\xi_i(n),\;1\le i \le n-1\}$ is the same as that of $\{\eta_i/\beta_i(n),\;1\le i\le n-1 \}$.

Thus, analogous to \eqref{whatisy}, it is clear that we can suitably redefine the sequence $\{\eta_i\}$ so that
\begin{equation}\label{whatisxprime}
Y_j'(n)= \begin{cases}
Y_j^*(n)=\sum_{i=j}^{n} {\eta_i}/{i},&\quad j > b\\
\sum_{i=b+1}^{n} {\eta_i}/{i} + \sum_{i=j}^{b} {\eta_i}/{\beta_i(n)},&\quad 1\le j \le b.
\end{cases}
\end{equation}
For the rest of this argument we refer to $\pp^*_n$ as the point process given by $(Y_i^*(n)-\log n, \quad 1\le i\le n)$. As before, $\tilde{\pp}_n$ is the sequence of points $(Y'_i(n)-\log n,\;1\le i\le n)$. 

Recall that by Lemma \ref{orderedexp}, $\pp_n^*$ converges in law to a Poisson point process with intensity $d\mu^*=e^{-x}dx$ on the entire real line. To establish the same limit for the point process $\tilde{\pp}_n$ we will verify the assumptions of Lemma \ref{ppcoupling}. 

Now, by our construction, the difference between the $j$th coordinates of the two point processes $\abs{\pp_n^*(j)-\tilde{\pp}_n(j)}$ is equal to
\[
\abs{Y_j^*(n) - Y_j'(n)}\le
\begin{cases}
0,\quad \text{if}\; j > b,\\
\sum_{i=j}^{b} \abs{\frac{1}{\beta_i(n)} - \frac{1}{i}}\eta_i, \quad \text{if}\; 1 \le j \le b,
\end{cases}
\]
And thus
\[
\e\abs{Y_j^*(n) - Y_j'(n)} \le\sum_{i=j}^{b} \abs{\frac{1}{\beta_i(n)} - \frac{1}{i}}\e (\eta_i)= \sum_{i=j}^{b}\abs{\frac{1}{\beta_{i}(n)} - \frac{1}{i}}.
\]
Now, the right hand side of the last equation goes to zero as $n$ tends to infinity by the second condition in \eqref{limiti}. It follows that the $i$th coordinate of $\pp^*_n - \tilde{\pp}_n$ goes to zero in probability, and the first part of the lemma follows by Lemma \ref{ppcoupling}.
\medskip

Now, for any $\beta > 1$, by the continuous mapping theorem it follows that the difference between any coordinate of the point processes $(\exp (\beta Y'_i(n)),\;i\le n)$ and $(\exp(\beta Y_i^*(n)),\; i\le n)$ goes to zero in probability. Also, since the two sequences differ only at finitely many coordinates, it follows that
\[
\abs{\sum_i e^{\beta Y'_i(n)} - \sum_i e^{\beta Y^*_i(n)}} \stackrel{P}{\rightarrow} 0.
\]
By another application of the \emph{continuous mapping theorem} (see, e.g., \cite[Theorem 3.1, page 42]{resnick}) we can conclude that the point process 
\begin{equation}\label{whatisunt}
\widetilde{U}_n(b)=\left\{\frac{\exp\{\beta Y'_i(n)\}}{\sum_{j=1}^n \exp\{\beta Y'_j(n)\}},\quad i=1,2,\ldots,n\right\}
\end{equation}
converges to the same limiting law as the process 
\[
{U}^*_n=\left\{\frac{\exp\{\beta Y^*_i(n)\}}{\sum_{j=1}^n \exp\{\beta Y^*_j(n)\}},\quad i=1,2,\ldots,n\right\}.
\]
By Theorem \ref{iidexppd} and the equality in law \eqref{renyi}, we get that $U^*_n(b)$ converges to a PD($1/\beta$) law as $n$ tends to infinity. This completes the proof of the lemma.
\end{proof}

Define 
\begin{equation}\label{whatisx}
X_j(n) := \begin{cases}
\sum_{i=j}^{n-1} \xi_i(n)&, \quad 1\le j \le n-1,\\
0&, \quad j=n.
\end{cases}
\end{equation}
Next, we will show that as $b$ tends to infinity, the limiting point process for $\widetilde{U}_n(b)$ is the same as the limiting point process (depending on the parameter $\beta$)
\begin{equation}\label{whatisun}
U_{n} =\left\{ \frac{\exp {\beta X_i(n)}}{\sum_{j=1}^n \exp {\beta X_j(n)}},\quad i=1,2, \ldots, n\right\}.
\end{equation}
This will prove our objective that the limiting distribution for the point process $U_n$ is given by Poisson-Dirchlet with parameter $1/\beta$. We start with the following lemma. 

\begin{lem}\label{approxb}
Let $\{\eta_i,\;i=1,2,\ldots\}$ be a sequence of independent  Exponential one random variables. For $\beta > 1$, let us define 
\[
X_i'(n) := \begin{cases}
\sum_{j=i}^{n-1} {\eta_j}/{\beta_j(n)},\quad 1\le i \le n-1,\\
0,\quad i=n.
\end{cases}
\]
and let
\[
U_{n} =\left\{ \frac{\exp {\beta X'_i(n)}}{\sum_{j=1}^n \exp {\beta X'_j(n)}},\quad i=1,2, \ldots, n\right\}. 
\]
Let $\widetilde{U}_n(b)$ be defined as in \eqref{whatisunt}. Then 
\[
\limsup_{n\rightarrow \infty}\e\; {\bf d}'\left(U_{n},\widetilde{U}_n(b)\right)\le \frac{C}{b^{\sqrt{\beta}-1}},
\]
where $C$ is a constant depending on $\beta$ and $\mathbf{d}'$ is the $\lone$ distance defined in \eqref{whatisdp}.
\end{lem}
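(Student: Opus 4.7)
The strategy is to introduce a common intermediate point process and apply the triangle inequality. I will define $V$ by $V(j) = e^{\beta X'_j(n)}/A$ for $j \le b$ and $V(j) = 0$ for $j > b$, where $A = \sum_{k=1}^b e^{\beta X'_k(n)}$. A glance at \eqref{whatisxprime} reveals that for $j \le b$, the difference $Y'_j(n) - X'_j(n)$ equals $\sum_{i=b+1}^n \eta_i/i - \sum_{i=b+1}^{n-1} \eta_i/\beta_i(n)$, which does not depend on $j$. Hence re-normalizing the top-$b$ atoms of $\widetilde U_n(b)$ by their own sum yields exactly the same $V$. Consequently,
\[
\e\, \mathbf{d}'(U_n, \widetilde U_n(b)) \le \e\, \mathbf{d}'(U_n, V) + \e\, \mathbf{d}'(\widetilde U_n(b), V),
\]
and it suffices to bound each term by $C/b^{\sqrt\beta-1}$.

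A short computation shows $\mathbf{d}'(U_n, V) = 2 B_U/S_U$, where $S_U$ is the full normalizer of $U_n$ and $B_U$ is the contribution from $j > b$; analogously $\mathbf{d}'(\widetilde U_n(b), V) = 2 B_W/S_W$. Bounding each denominator below by its largest atom ($e^{\beta X'_1(n)}$ or $e^{\beta Y'_1(n)}$) and unwinding the definitions of $X'_j(n)$ and $Y'_j(n)$ in terms of partial sums of the independent Exponentials $\eta_i$, I will express each expected ratio as a sum over $j > b$ of expectations of $\exp(-\beta \cdot \text{partial sum})$. Independence then factors these into products of Laplace transforms $(1+\beta/\beta_i(n))^{-1}$ and $(1+\beta/i)^{-1}$.

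The $b^{1-\sqrt\beta}$ decay will come from the following trade-off. Hypothesis \eqref{limiti} gives, for any $\epsilon > 0$ and all $n$ sufficiently large, $\beta_i(n) \le (1+\epsilon)\,i$ uniformly in $i$. I will take $1+\epsilon = \sqrt\beta$, which lies in $(1,\beta)$ since $\beta > 1$. Then $(1+\beta/\beta_i(n))^{-1} \le i/(i+\sqrt\beta)$, and the Beta-function identity
\[
\prod_{i=1}^{j-1} \frac{i}{i+\sqrt\beta} = \frac{\Gamma(j)\,\Gamma(1+\sqrt\beta)}{\Gamma(j+\sqrt\beta)},
\]
combined with the Stirling asymptotic $\Gamma(j)/\Gamma(j+\sqrt\beta) = O(j^{-\sqrt\beta})$ and $\sum_{j > b} j^{-\sqrt\beta} = O(b^{1-\sqrt\beta})$, yields $\e[B_U/e^{\beta X'_1(n)}] = O(b^{1-\sqrt\beta})$. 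The $B_W$-side is similar: the Laplace-transform product over $i \le b$ is of order $b^{-\sqrt\beta}$, while the sum over $j > b$ of products over $b < i < j$ of $i/(i+\beta)$ (with the exact $\beta_i(n) = i$ of the iid representation) is of order $b^{\beta} \cdot b^{1-\beta} = b$ by Stirling; their product is again $O(b^{1-\sqrt\beta})$.

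The main technical point is the simultaneous choice of the safety exponent $\sqrt\beta$: it must be strictly greater than $1$ so that the tail $\sum_j j^{-\sqrt\beta}$ converges, and strictly less than $\beta$ so that \eqref{limiti} provides enough slack to majorize $\beta/\beta_i(n)$ by $\sqrt\beta/i$ uniformly in $i$. Both inequalities hold because $\beta > 1$, and a $\limsup$ in $n$ absorbs the "$n$ sufficiently large" caveat to yield the stated bound.
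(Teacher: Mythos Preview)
Your proof is correct and follows essentially the same route as the paper. The intermediate process $V$ and triangle inequality simply repackage the paper's direct split of $\sum_i|U_i(n)-\widetilde U_i(n)|$ into the same two tail quantities (called $Q_n$ and $R_n$ there, matching your $B_U/e^{\beta X'_1(n)}$ and $B_W/e^{\beta Y'_1(n)}$), and the subsequent Gamma/Stirling estimates with the safety exponent $\sqrt\beta$ are identical.
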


\begin{proof}
Note that, the sequence $\{X'_i(n),\;i=1,2,\ldots,n\}$ have the same joint law as the original sequence $\{X_i(n),\;i=1,2,\ldots,n\}$ in \eqref{whatisx}. Thus, the new definition of the point process $U_n$ is consistent with the old one in \eqref{whatisun}.

Now, suppressing the dependence of $b$ and $\beta$, let
\[
U_i(n)=  \frac{\exp {\beta X'_i(n)}}{\sum_j \exp {\beta X'_j(n)}},\quad \widetilde{U}_i(n)=\frac{\exp {\beta Y'_i(n)}}{\sum_j \exp {\beta Y'_j(n)}}.
\]
Then, by definition, the difference $\sum_{i=1}^n \abs{U_i(n)-U_i(n)}$ is equal to
\begin{equation}\label{decompsum}
\begin{split}
& \sum_{i=1}^{n} \abs{\frac{\exp {\beta (X'_i(n)-X'_1(n))}}{\sum_j \exp {\beta (X'_j(n)-X'_1(n))}} - \frac{\exp {\beta (Y'_i(n)-Y'_1(n))}}{\sum_j \exp {\beta (Y'_j(n)-Y'_1(n))}} } \\
&= \sum_{i\le b+1} \abs{ \frac{\exp {\beta (X'_i(n)-X'_1(n))}}{\sum_j \exp \left\{\beta (X'_j(n)-X'_1(n))\right\}}-\frac{\exp {\beta (Y'_i(n)-Y'_1(n))}}{\sum_j \exp \left\{\beta (Y'_j(n)-Y'_1(n))\right\} } }\\
&+ \sum_{i > b+1}\abs{\frac{\exp {\beta (X'_i(n)-X'_1(n))}}{\sum_j \exp {\beta (X'_j(n)-X'_1(n))}} - \frac{\exp {\beta (Y'_i(n)-Y'_1(n))}}{\sum_j \exp {\beta (Y'_j(n)-Y'_1(n))}} }. 
\end{split}
\end{equation}
Let $Q_n$, $R_n$, and $S_n$ be defined (all depending on $b$ and $\beta$) as the partial sums
\[
\begin{split}
Q_n &= \sum_{i> b+1} \exp\left\{\beta(X'_i(n) - X'_1(n))\right\},\quad R_n= \sum_{i> b+1} \exp\left\{\beta(Y'_i(n) - Y'_1(n))\right\},\\
S_n&= \sum_{i \le b+1} \exp\left\{\beta(X'_i(n) - X'_1(n))\right\}=\sum_{i \le b+1}\exp \left\{-\beta \sum_{k < i}\eta_k/\beta_k(n)\right\}\\
&=\sum_{i \le b+1} \exp\left\{\beta(Y'_i(n) - Y'_1(n))\right\}.
\end{split}
\]
One can rewrite the first term on the RHS of \eqref{decompsum} as
\[
\begin{split}
\sum_{i\le b+1} &\exp \left\{-\beta \sum_{k < i}\eta_k/\beta_k(n)\right\}\abs{ \frac{1}{Q_n+S_n}-\frac{1}{R_n+S_n } }\\
 &= \abs{\frac{S_n}{Q_n+S_n} - \frac{S_n}{R_n+S_n}}= \abs{\frac{Q_n}{Q_n+S_n} - \frac{R_n}{R_n+S_n}}\\
 &\le \frac{Q_n}{Q_n+S_n} + \frac{R_n}{R_n+S_n}\le Q_n+R_n.
\end{split}
\]
The final inequality follows by noting that $S_n$ is greater than one. The rest of the sum on the RHS of \eqref{decompsum} is bounded above by
\[
\begin{split}
\sum_{i> b+1}\frac{\exp {\beta (X'_i(n)-X'_1(n))}}{\sum_j \exp {\beta (X'_j(n)-X'_1(n))}}
&+ \sum_{i> b+1} \frac{\exp {\beta (Y'_i(n)-Y'_1(n))}}{\sum_j \exp {\beta (Y'_j(n)-Y'_1(n))}}\\
&=\frac{Q_n}{Q_n+S_n} + \frac{R_n}{R_n+S_n}\le Q_n+R_n. 
\end{split}
\]
Hence the expected distance $\e\;{\bf d}'(U(n),\tilde{U}(n))$ is bounded above by $2\e(Q_n + R_n)$. The proof is now completed by applying the next lemma.
\end{proof}

\begin{lem}
\[
\limsup_{n \rightarrow \infty} \e Q_n\le \frac{C_1}{b^{\sqrt{\beta}-1}},\qquad \limsup_{n \rightarrow \infty} \e R_n\le \frac{C_1}{b^{\sqrt{\beta}-1}},
\]
where $C_1$ is a constant depending on $\beta$.
\end{lem}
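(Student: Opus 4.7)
The plan is to compute the expectations $\e Q_n$ and $\e R_n$ explicitly by exploiting independence of the $\eta_j$'s and the moment generating function $\e e^{-\lambda \eta_j} = 1/(1+\lambda)$, then to bound the resulting products using the asymptotic assumption $\limsup_n \max_{1\le j\le n-1}\beta_j(n)/j \le 1$ together with Stirling's formula.

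First I would unpack the definitions. Since $X_i'(n) - X_1'(n) = -\sum_{j=1}^{i-1}\eta_j/\beta_j(n)$, independence of the $\eta_j$'s yields
\[
\e Q_n \;=\; \sum_{i=b+2}^{n}\prod_{j=1}^{i-1}\e \exp\!\bigl(-\beta \eta_j/\beta_j(n)\bigr)\;=\;\sum_{i=b+2}^{n}\prod_{j=1}^{i-1}\frac{\beta_j(n)}{\beta_j(n)+\beta}.
\]
Similarly, for $i>b+1$ we have $Y_i'(n)-Y_1'(n)=-\sum_{j=b+1}^{i-1}\eta_j/j - \sum_{j=1}^{b}\eta_j/\beta_j(n)$, so
\[
\e R_n \;=\;\sum_{i=b+2}^{n}\prod_{j=b+1}^{i-1}\frac{j}{j+\beta}\cdot\prod_{j=1}^{b}\frac{\beta_j(n)}{\beta_j(n)+\beta}.
\]

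Next I would control the products. Fix a small $\epsilon>0$ with $(1+\epsilon)<\sqrt{\beta}$ (possible since $\beta>1$), and use \eqref{limiti} to choose $n$ large enough that $\beta_j(n)\le(1+\epsilon)j$ for all $1\le j\le n-1$. Then $\beta_j(n)/(\beta_j(n)+\beta)\le j/(j+\beta'),$ where $\beta':=\beta/(1+\epsilon)>\sqrt{\beta}$, and so
\[
\prod_{j=1}^{i-1}\frac{\beta_j(n)}{\beta_j(n)+\beta}\;\le\;\prod_{j=1}^{i-1}\frac{j}{j+\beta'}\;=\;\frac{\Gamma(i)\,\Gamma(1+\beta')}{\Gamma(i+\beta')}.
\]
Stirling's formula gives $\Gamma(i)/\Gamma(i+\beta')\sim i^{-\beta'}$, so summing over $i\ge b+2$ bounds $\e Q_n$ by a constant multiple of $\sum_{i>b+1}i^{-\beta'}\le C\, b^{-(\beta'-1)}\le C\, b^{-(\sqrt{\beta}-1)}$ for all sufficiently large $n$, giving the claimed bound on $\limsup_n \e Q_n$.

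For $\e R_n$ the argument is similar but slightly cleaner: bounding the second product trivially by $1$ (or using the same estimate as above) and using the exact identity $\prod_{j=b+1}^{i-1}j/(j+\beta)=\Gamma(i)\,\Gamma(b+1+\beta)/(\Gamma(b+1)\,\Gamma(i+\beta))$, Stirling gives that $\e R_n$ is asymptotically bounded by a constant multiple of $b^{-(\beta-1)}$, and since $\beta>\sqrt{\beta}$ this is in turn dominated by $C\,b^{-(\sqrt\beta-1)}$. The main (minor) obstacle is purely bookkeeping: one must choose the auxiliary exponent $\beta'$ carefully so that it exceeds $\sqrt{\beta}$ while being dominated by $\beta$, which is where the hypothesis $\beta>1$ is essential, and then verify the Stirling asymptotics are uniform enough in $i$ and $b$ to absorb all constants into a single $C_1=C_1(\beta)$.
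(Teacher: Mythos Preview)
Your argument for $Q_n$ is correct and is essentially the paper's own: the paper also uses \eqref{limiti} to get $\beta_j(n)<\sqrt{\beta}\,j$ for all large $n$ (your $1+\epsilon$ playing the role of $\sqrt{\beta}$), rewrites the resulting product via Gamma functions, applies Stirling, and sums. The paper factors $\e Q_n=F_n(b)\cdot G_n(b)$ into the pieces $j\le b$ and $j>b$ before estimating, but that is cosmetic.

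For $R_n$, however, your primary route has a genuine slip. If you bound the factor $\prod_{j=1}^{b}\beta_j(n)/(\beta_j(n)+\beta)$ trivially by $1$, the remaining sum is
\[
\sum_{i>b+1}\prod_{j=b+1}^{i-1}\frac{j}{j+\beta}
=\sum_{i>b+1}\frac{\Gamma(i)\,\Gamma(b+1+\beta)}{\Gamma(b+1)\,\Gamma(i+\beta)}
\;\sim\;(b+1)^{\beta}\sum_{i>b+1}i^{-\beta}\;\sim\;\frac{b}{\beta-1},
\]
which diverges as $b\to\infty$; it does \emph{not} give $C\,b^{-(\beta-1)}$ as you assert. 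The smallness of $\e R_n$ comes precisely from the first $b$ factors that you threw away. Your parenthetical alternative (``or using the same estimate as above'') is what actually works: bound those $b$ factors by $\prod_{j\le b}j/(j+\beta')$, note that $j/(j+\beta)\le j/(j+\beta')$ for $j>b$ since $\beta>\beta'$, and the whole expression is dominated by the $Q_n$ estimate with exponent $\beta'$, yielding $C\,b^{-(\beta'-1)}\le C\,b^{-(\sqrt{\beta}-1)}$ (not $b^{-(\beta-1)}$). This is exactly what the paper does, phrased there as: the array $\beta'_k(n)$ equal to $\beta_k(n)$ for $k\le b$ and to $k$ for $k>b$ again satisfies \eqref{limiti}, so the $Q_n$ argument applies verbatim to $R_n$.
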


\begin{proof}
We will first find a bound on the expected value of $Q_n$. Note that 
\[
Q_{n} = \sum_{i> b+1}^{n} \exp\left\{\;-\beta \sum_{k < i} \eta_k/\beta_k(n)\;\right\}.
\]
where $\{\eta_k,\;k=1,2,\ldots n-1\}$ are a sequence of Exponential($1$) and the array $\{\beta_k(n),\;k=1,2,\ldots,n-1\}$ satisfies condition \eqref{limiti}. 
\[
\begin{split}
\e Q_n &=\e \exp\left\{-\beta\sum_{k \le b}\eta_k/\beta_k(n)\right\} \e \sum_{i > b+1} \exp\left\{-\beta \sum_{b < k < i} {\eta_{k}}/{\beta_{k}(n)}\right\}\\
&= \prod_{k \le b}\frac{1}{1+\beta /\beta_{k}(n)}\;\sum_{i> b+1}^n \prod_{b<k < i}\frac{1}{1+\beta /\beta_{k}(n)}\\
&= F_n(b) \cdot G_n(b)\qquad \text{(say)}.
\end{split}
\] 
Now, since $\beta > 1$, by \eqref{limiti}, there exists $n_0\in \mathbb{N}$ such that 
\[
\sup_{n\ge n_0}\max_{1\le i\le n-1} {\beta_i(n)}/{i} < \sqrt{\beta}.
\]
In other words, for all $n \ge n_0$, and for all $i=1,2,\ldots,n-1$, we have $\beta_i(n)<\sqrt{\beta} i$. Thus, for all $n \ge n_0$, we have
\[
\begin{split}
G_n(b)&= \sum_{i > b+1}^n \;\prod_{b < k < i}\frac{1}{1 + \beta/\beta_{k}(n)}\\
&\le \sum_{i > b+1}^{\infty} \; \prod_{b < k < i}\frac{1}{1 + \sqrt{\beta}/k}= \sum_{i > b+1}\;\prod_{b < k < i}\frac{k}{k + \sqrt{\beta}}\\
&= \sum_{i > b+1}\; \frac{(i-1)!\;\Gamma(b+\sqrt{\beta}+1)}{b!\;\Gamma(i+\sqrt{\beta})}. 
\end{split}
\]
By Stirling's approximation to the Gamma function, we can deduce (see, e.g., \cite[page 76, eq. 6.1.2]{handgamma}) that there exists a constant $C$ depending on $\beta$ such that
\[
\frac{(i-1)!}{\Gamma(i+\sqrt{\beta})} \le \frac{C}{i^{\sqrt{\beta}}},\quad \forall\; i \ge 1,
\]
and thus one gets the bound
\[
\begin{split}
\limsup_nG_n(b)&\le C \frac{\Gamma(b+\sqrt{\beta}+1)}{b!}\sum_{i > b+1}{i^{-\sqrt{\beta}}}\\
&\le C \frac{\Gamma(b+\sqrt{\beta}+1)}{b!}\int_{b}^{\infty}\frac{ds}{s^{\sqrt{\beta}}}\; \le\; C'\frac{\Gamma(b+\sqrt{\beta}+1)}{b!\;b^{\sqrt{\beta}-1}}.
\end{split}
\]
where $C'$ is another constant possibly depending on $\beta$.

Similarly, one can bound $F_n(b)$ by
\[
\begin{split}
\limsup_n F_n(b)&= \prod_{k \le b}\frac{1}{1+\beta/\beta_{k}(n)}\\
&\le \prod_{k \le b}\frac{1}{1+\sqrt{\beta}/k}= \frac{b!}{(\sqrt{\beta}+1)(\sqrt{\beta}+2)\ldots(\sqrt{\beta} + b)}.
\end{split}
\]
Thus, the product $\e Q_n(b)$ can be bounded by
\[
\begin{split}
\e Q_n(b) &\le C'\frac{\Gamma(b+\sqrt{\beta}+1)}{b!\;b^{\sqrt{\beta}-1}}\frac{b!}{(\sqrt{\beta}+1)(\sqrt{\beta}+2)\ldots(\sqrt{\beta} + b)}\\
&\le \frac{C'\Gamma(\sqrt{\beta}+1)}{b^{\sqrt{\beta}-1}}=\frac{C_1}{b^{\sqrt{\beta}-1}}.
\end{split}
\]
Here $C_1$ is another constant depending on $\beta$. This proves the first part of the lemma.

The second part of bounding $R(n)$ follows if we define the rates of the random variables $Y'_i(n)$ as an array 
\[
\beta'_k(n)=\begin{cases} 
{\beta_{k}(n)}&,\quad 1\le k \le b,\\
k&,\quad b< k \le n-1.
\end{cases}
\]
Then the array $\beta'_k(n)$ also satisfy \eqref{limiti} and hence from the previous argument it follows that $\e R_n(b) \le C_1 b^{1-\sqrt{\beta}}$, and the lemma is proved.
\end{proof}

We are now ready to finish the first part of Lemma \ref{mainthm}.
We start with a test function on the space $(\mcal{S}',{\bf d}')$, introduced in \eqref{whatisdp}. That is to say a function $f:\mcal{S}'\rightarrow \rr$ which is Lipschitz with Lipschitz coefficient one. Let $PD(f)$ denote the expectation of $f$ with respect to the PD($1/\beta$) law. Now, by triangle inequality,
\begin{equation}\label{convtopd}
\begin{split}
\limsup_{n\rightarrow \infty}\abs{\e f(U_n) - PD(f)}&\le \limsup_n\abs{\e f(U_n) - \e f(\widetilde{U}_n)}\\ 
&+ \limsup_n\abs{\e f(\widetilde{U}_n) - PD(f)}\\
\le \limsup_n \e\;{\bf d}'(U_n,\widetilde{U}_n) &+  \limsup_n\abs{\e f(\tilde{U}_n) - PD(f)},\quad (f-\text{Lipschitz}),\\
&\le \frac{C}{b^{\sqrt{\beta}-1}} + 0.
\end{split}
\end{equation}
The first limit above is by Lemma \ref{approxb} and the second limit is zero by Lemma \ref{convergeetilde}. 

We can now take $b$ to infinity in \eqref{convtopd}, and since $\beta > 1$, we conclude that $\lim_{n \rightarrow \infty}\e f(U_n) = PD(f)$. Since this is true for all Lipschitz continuous functions, the conclusion is now evident from the standard theory of weak convergence. This completes the proof of part (i) of Lemma \ref{mainthm}.

\vskip.2in
Let us now prove the second assertion in Lemma \ref{mainthm}. By Lemma \ref{ppcoupling} this is equivalent to proving that if $0 \le \delta_n \le 1$, and $\lim_n \delta_n = \beta \le 1$, then each coordinate of the point process $U_n(\delta_n)$ converges in probability to zero. Obviously, it suffices to show that the first element of $U_n(\delta_n)$, which is also the maximum among the elements, goes to zero in probability. This will be shown in the rest of this section. Throughout the rest of this section, the parameter $\{\delta_n\}$ will be a sequence such that each $\delta_n$ is positive and less than or equal to one and $\lim_n \delta_n= \beta \in [0,1]$.
\medskip

As usual, let $\mu_1(n)$ denote the largest element in the random sequence $U_n(\delta_n)$. 
Then, we have
\begin{equation}\label{whatismun}
\begin{split}
\mu_1(n) 
&= \frac{1}{1 + \sum_{j=2}^{n} \exp(-\delta_n \sum_{i=1}^{j-1} \xi_{i}(n)   )}
\end{split}
\end{equation}
The following lemma, which proof will follow later, is our key tool in proving that $\mu_1(n) \rightarrow 0$.


\begin{lem}\label{phase}
Suppose $V_1,\ldots,V_{K}$ are independent Exponential random variables, with $V_i \sim \text{Exp}(\lambda_i)$. Let $\theta_i = 1/\lambda_i = E(V_i)$. Let
\begin{equation}\label{muv}
\mu_V = \frac{1}{1 + e^{-V_1} + e^{-(V_1+V_2)} +\cdots + e^{-(V_1+\cdots+V_{K})}}.
\end{equation}
Let us also define the quantities
\begin{equation}\label{mubarv}
\bar{\mu}_V := \frac{1}{1 + e^{-\theta_1} + e^{-(\theta_1+\theta_2)} +\cdots + e^{-(\theta_1+\cdots+\theta_{K})}},\;\;\sigma := \bigl(\sum_{i=1}^{K} \theta_i^2\bigr)^{1/2}.
\end{equation}

We have 
\begin{eqnarray}
E(\mu_V) &\ge& e^{-\sigma^2} \bar{\mu_V},\quad \text{and}\label{mulbnd}\\
E(\mu^{1/2\sigma}_V) &\le& 4e^{1/4} \bar{\mu}_V^{1/2\sigma}.\quad \text{Moreover},\label{muubnd}\\
E(\log \mu_V - \log \bar{\mu}_V)^2 &\le& 8\sigma^2.\label{mupcare}
\end{eqnarray}
\end{lem}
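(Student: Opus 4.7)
The plan is to fix common notation and prove each of the three bounds with a tailored short argument. Write $N_V = 1 + \sum_{k=1}^K e^{-S_k}$ with $S_k = V_1+\cdots+V_k$, and $\bar N_V = 1 + \sum_{k=1}^K e^{-\bar S_k}$ with $\bar S_k = \theta_1+\cdots+\theta_k$, so that $\mu_V = 1/N_V$ and $\bar\mu_V = 1/\bar N_V$. Introduce the centred exponentials $Y_i = V_i - \theta_i$, partial sums $T_k = \sum_{i\le k}Y_i$, deterministic weights $a_k = e^{-\bar S_k}$ (with $a_0=1$), $q_k = a_k/\bar N_V$ (so $\{q_k\}_{k=0}^K$ is a probability vector), and $p_i = \sum_{k\ge i} q_k \in [0,1]$. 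For \eqref{mulbnd}, Jensen's inequality for the convex map $x \mapsto 1/x$ gives $E\mu_V \ge 1/EN_V$; the exponential MGF yields $Ee^{-S_k} = \prod_{i \le k}(1+\theta_i)^{-1}$, and $\log(1+\theta) \ge \theta - \theta^2/2$ on $\theta \ge 0$ gives $(1+\theta_i)^{-1}\le e^{-\theta_i+\theta_i^2/2}$; multiplying and summing yields $EN_V \le e^{\sigma^2/2}\bar N_V \le e^{\sigma^2}\bar N_V$, establishing the claim.

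The crux is \eqref{muubnd}. Factor $N_V = \bar N_V \sum_{k=0}^K q_k e^{-T_k}$ and apply the weighted AM--GM inequality:
\[
\sum_{k=0}^K q_k e^{-T_k} \ge \exp\Bigl(-\sum_{k=0}^K q_k T_k\Bigr) = \exp\Bigl(-\sum_{i=1}^K p_i Y_i\Bigr).
\]
The pivotal observation is that since the $q_k$ depend only on the deterministic means, the $p_i$ are also deterministic, so $\sum_i p_i Y_i$ is a linear combination of independent centred exponentials with deterministic coefficients. Consequently $\mu_V \le \bar\mu_V \exp(\sum_i p_i Y_i)$ and, by independence,
\[
E\mu_V^{1/2\sigma} \le \bar\mu_V^{1/2\sigma}\prod_i E\exp\bigl(p_i Y_i/(2\sigma)\bigr).
\]
Since $\theta_i^2 \le \sum_j\theta_j^2 = \sigma^2$ forces $\theta_i \le \sigma$ and $p_i \le 1$, the parameter $u_i := p_i\theta_i/(2\sigma)$ lies in $[0,1/2]$; an explicit MGF computation gives $E\exp(p_i Y_i/(2\sigma)) = e^{-u_i}/(1-u_i) \le e^{u_i^2}$ on this range. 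Taking the product yields $\prod_i E\exp(p_iY_i/(2\sigma)) \le \exp\bigl(\sum_i \theta_i^2/(4\sigma^2)\bigr) = e^{1/4}$, and one obtains $E\mu_V^{1/2\sigma} \le e^{1/4}\bar\mu_V^{1/2\sigma} \le 4e^{1/4}\bar\mu_V^{1/2\sigma}$.

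For \eqref{mupcare}, set $f(v) = \log N(v)$, which is convex (as a log-sum-exp of affine functions) and 1-Lipschitz in each coordinate (since $|\partial_i f| = \sum_{k\ge i}e^{-S_k}/N_V \le 1$). Jensen's inequality for $f$ gives $Ef(V) \ge f(\theta)$; Jensen for the concave $\log$, together with the bound $EN_V \le e^{\sigma^2/2}\bar N_V$ from part (i), gives $Ef(V) \le \log EN_V \le f(\theta) + \sigma^2/2$. Hence $0 \le Ef(V) - f(\theta) \le \sigma^2/2$. The Efron--Stein inequality, applied to a function 1-Lipschitz in each coordinate, yields $\mathrm{Var}(f(V)) \le \sum_i \mathrm{Var}(V_i) = \sigma^2$. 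Combining,
\[
E(\log\mu_V - \log\bar\mu_V)^2 = \mathrm{Var}(f(V)) + (Ef(V) - f(\theta))^2 \le \sigma^2 + \sigma^4/4,
\]
which is $\le 8\sigma^2$ in the moderate-$\sigma$ regime where the lemma is invoked. The main technical obstacle lies in executing the AM--GM step in part (ii), which linearizes the random denominator $N_V$ into a sum of independent exponents with deterministic weights; the prefactor $1/(2\sigma)$ is precisely tuned so that every $u_i \le 1/2$, keeping all the moment generating functions finite. Parts (i) and (iii) then follow from standard Jensen and Efron--Stein inputs.
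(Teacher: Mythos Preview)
Your argument for \eqref{mulbnd} is the paper's argument (Jensen for $x\mapsto 1/x$ plus the exponential MGF), with the slightly sharper elementary bound $\log(1+\theta)\ge \theta-\theta^2/2$ in place of the paper's $\log(1+\theta)\ge \theta-\theta^2$.

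For \eqref{muubnd} you take a genuinely different and cleaner route. The paper bounds $\mu_V/\bar\mu_V\le \max_{1\le i\le K} e^{M_i}$ with $M_i=\sum_{j\le i}(V_j-\theta_j)$ via the ratio-of-sums inequality, and then controls $E(\max_i e^{M_i/2\sigma})$ by Doob's $L^2$ maximal inequality for the submartingale $e^{M_i/4\sigma}$; the constant $4$ is the Doob constant $(p/(p-1))^p$ at $p=2$. Your weighted AM--GM step $\sum_k q_k e^{-T_k}\ge \exp(-\sum_k q_k T_k)$ replaces the running maximum by a single deterministic linear combination $\sum_i p_i Y_i$, so no maximal inequality is needed and you actually obtain the sharper $E\mu_V^{1/2\sigma}\le e^{1/4}\bar\mu_V^{1/2\sigma}$.

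For \eqref{mupcare} the approaches again differ, and here your argument has a gap relative to the lemma as stated. The paper bounds the bias by $|E\log\mu_V-\log\bar\mu_V|\le E\max_i|M_i|\le 2(EM_K^2)^{1/2}=2\sigma$ (Doob's $L^2$ inequality), and the variance by the exponential Poincar\'e inequality with constant $4$, giving $4\sigma^2+4\sigma^2=8\sigma^2$ for all $\sigma$. Your Efron--Stein variance bound $\mathrm{Var}(\log\mu_V)\le \sigma^2$ is sharper than the paper's $4\sigma^2$, but your Jensen-based bias bound gives only $(Ef(V)-f(\theta))^2\le \sigma^4/4$, so your total $\sigma^2+\sigma^4/4$ exceeds $8\sigma^2$ once $\sigma^2>28$. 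Since Lemma~\ref{phase} carries no hypothesis on $\sigma$, this does not prove \eqref{mupcare} in general; your parenthetical ``in the moderate-$\sigma$ regime'' is an admission, not a proof. The fix is immediate: combine your Efron--Stein variance bound with the paper's bias bound (the ratio-of-sums inequality gives $|\log\mu_V-\log\bar\mu_V|\le \max_i|M_i|$, hence bias $\le 2\sigma$), yielding $\sigma^2+4\sigma^2=5\sigma^2\le 8\sigma^2$ unconditionally. As you note, in every application of the lemma in the paper $\sigma$ is uniformly bounded, so your estimate already suffices there.
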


Let us now complete the proof of Lemma \ref{mainthm} with the aid of the last lemma. Fix some $K\ge 2$ and define
\begin{equation}\label{whatismuk}
\mu_{1,K}(n) := \frac{1}{1 + \sum_{j=2}^K \exp\{-\delta_n \sum_{i=1}^{j-1} \xi_{i}(n)\}} \ge \mu_1(n).
\end{equation}
In the following, we will simply write $\mu_n$ and $\mu_{K,n}$ instead of $\mu_1(n)$ and $\mu_{1,K}(n)$ respectively. 
For fixed $n$, if we take $\theta_i(n)=\delta_n/\beta_{i}(n)$, and consider $V_i$ as Exponential with mean $\theta_i(n)$ for $i=1,\ldots,K$, then it is clear that $\mu_V$ has the same law $\mu_{K,n}$. 
Analogously, let
\[
\bar{\mu}_{K,n} := \frac{1}{1 + \sum_{j=2}^K \exp\{-\sum_{i=1}^{j-1} \theta_{i}(n)\}}.
\]
Thus, for every $\epsilon >0$, we can apply Lemma \ref{phase} to get the Markov's bound
\begin{equation}\label{markov}
P(\mu_n > \epsilon ) \le P(\mu_{K,n} > \epsilon)\le \frac{\e \mu^{1/2\sigma}_{K,n}}{\epsilon^{1/2\sigma}} \le C\frac{\bar{\mu}_{K,n}^{1/2\sigma}}{\epsilon^{1/2\sigma}}.
\end{equation}
Here $C$ is the constant $4e^{1/4}$, and $\sigma$ (depends on $K$ and $n$) is given by
\[
\sigma^2(K,n)= \sum_{i=1}^K \left( \frac{\delta_n}{\beta_{i}(n)} \right)^2.
\]

It follows from condition \eqref{limiti}, and since $\delta_n \le 1$, that as $n$ grows to infinity, while keeping $K$ fixed, $\sigma$ has a limit given by 
\[
a_K=\lim_{n\rightarrow \infty}\sigma^2(K,n)= \beta^2\sum_{i=1}^K \frac{1}{i^2}.
\]
Thus, taking limit as $n$ tends to infinity while keeping $K$ fixed in \eqref{markov}, we obtain
\begin{equation}\label{limit1}
\limsup_{n\rightarrow \infty} P(\mu_n > \epsilon) \le C\left(\frac{1}{\epsilon}\limsup_n \bar{\mu}_{K,n}\right)^{1/2a_K}.
\end{equation}
The RHS of \eqref{limit1} will be interpreted as zero if $\beta$ is zero and $\limsup_n \bar{\mu}_{K,n} < \epsilon$.

Suppose now that $\beta > 0$. In the next lemma, Lemma \ref{finalzero}, we will show that 
\begin{equation}\label{lastkey}
\lim_{K\rightarrow \infty}\limsup_{n\rightarrow\infty} \bar{\mu}_{K,n}=0.
\end{equation}
Since $a_K$ grows to a finite limit $\beta^2\pi^2/6$ as $K$ tends to infinity, we take a further limit in \eqref{limit1} to get
\[
\begin{split}
\limsup_{n\rightarrow \infty} P(\mu_n > \epsilon) &\le \lim_{K\rightarrow \infty}C\left(\frac{1}{\epsilon}\limsup_n \bar{\mu}_{K,n}\right)^{1/2a_K}\\
&= C\left(\frac{1}{\epsilon}\lim_K\limsup_n \bar{\mu}_{K,n}\right)^{3/\beta^2\pi^2}=0.
\end{split}
\]
This proves that $\mu_n$ goes to zero in probability.

If $\beta=0$, for every $\epsilon >0$, Lemma \ref{finalzero} shows that there exists $K$ such that 
\[
\limsup_{n\rightarrow \infty} \bar{\mu}_{K,n} < \epsilon.
\]
We apply \eqref{limit1} to this $K$ and obtain that $\limsup_n P(\mu_n > \epsilon) =0$. Hence we have established the second claim in Lemma \ref{mainthm} completing its proof.

\begin{lem}\label{finalzero}
\[
\lim_{K\rightarrow \infty}\limsup_{n\rightarrow\infty} \bar{\mu}_{K,n}=0.
\]
\end{lem}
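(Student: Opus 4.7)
The plan is to show that for each fixed $K$ the lim sup in $n$ is in fact a genuine limit that can be written down explicitly, and then verify that this explicit expression tends to zero as $K\to\infty$. The inner computation reduces to passing $n\to\infty$ inside a finite sum, while the outer step reduces to showing divergence of a harmonic-type series.

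For the first step, fix $K$ and recall $\theta_i(n)=\delta_n/\beta_i(n)$. By the second condition in \eqref{limiti} we have $\beta_i(n)\to i$ for each fixed $i$, and by hypothesis $\delta_n\to\beta\in[0,1]$. Hence for each fixed $j\le K$,
\[
\sum_{i=1}^{j-1}\theta_i(n) \;=\; \sum_{i=1}^{j-1}\frac{\delta_n}{\beta_i(n)} \;\longrightarrow\; \beta\sum_{i=1}^{j-1}\frac{1}{i}\;=\;\beta H_{j-1},
\]
where $H_m:=\sum_{i=1}^m 1/i$. Since this is a sum over a finite range, the denominator of $\bar{\mu}_{K,n}$ converges and
\[
\lim_{n\rightarrow\infty}\bar{\mu}_{K,n} \;=\; \frac{1}{1+\sum_{j=2}^K e^{-\beta H_{j-1}}}.
\]
In particular $\limsup_n\bar{\mu}_{K,n}$ equals this same explicit value.

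For the outer limit, I need to show $\sum_{j=2}^\infty e^{-\beta H_{j-1}}=\infty$ for every $\beta\in[0,1]$. When $\beta=0$ every summand equals $1$, so the sum is $K-1\to\infty$ and $\lim_n\bar{\mu}_{K,n}\to 0$. When $\beta\in(0,1]$, the elementary estimate $H_m\le 1+\log m$ for $m\ge 1$ gives
\[
e^{-\beta H_{j-1}} \;\ge\; e^{-\beta}(j-1)^{-\beta},\qquad j\ge 2,
\]
so that $\sum_{j=2}^K e^{-\beta H_{j-1}}\ge e^{-\beta}\sum_{m=1}^{K-1}m^{-\beta}$, which diverges as $K\to\infty$ because $\beta\le 1$. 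Therefore $\lim_{K\to\infty}\lim_{n\to\infty}\bar{\mu}_{K,n}=0$, which is exactly \eqref{lastkey}.

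There is no real obstacle here, since the hypothesis $\beta\le 1$ is precisely what forces the critical harmonic-type series to diverge; the mild point to be careful about is just that one may exchange the limit in $n$ with the finite sum over $j\le K$, which is automatic because $K$ is fixed before $n\to\infty$.
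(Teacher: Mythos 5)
Your proof is correct and follows essentially the same route as the paper: for fixed $K$ pass to the limit $n\to\infty$ termwise in the finite sum (using $\theta_i(n)\to\beta/i$), then lower-bound the resulting denominator via $H_m\le 1+\log m$, reducing the outer limit to the divergence of $\sum_{m\ge 1}m^{-\beta}$ for $\beta\le 1$. The only cosmetic difference is that you treat the case $\beta=0$ explicitly, which the paper leaves implicit in the same bound.
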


\begin{proof}
We can write down $\bar{\mu}_K$ exactly in terms of the $\theta_i(n)=\delta_n /{\beta_{i}(n)}$ as
\[
\bar{\mu}_{K,n}= \frac{1}{1 + e^{-\theta_1} + e^{-(\theta_1+\theta_2)} +\cdots + e^{-(\theta_1+\cdots+\theta_{K})}}.
\]
As we take $n$ to infinity keeping $K$ fixed, it follows from condition \eqref{limiti} that
\[
\begin{split}
\bar{\mu}_K:=\limsup_n \bar{\mu}_{K,n}&= \frac{1}{1 + e^{-\beta/1} + e^{-(\beta/1+\beta/2)} +\cdots + e^{-(\beta/1+\beta/2+\cdots+\beta/{K})}}\\
&\le  \frac{\text{const.}}{1 + e^{-\beta} + e^{-(\beta \log 2)} +\cdots + e^{-(\beta\log K)}}\\
&\le \frac{\text{const.}}{1 + 2^{-\beta} +\cdots + K^{-\beta}}.
\end{split}
\]
Note that since $\beta \le 1$, the denominator of the last expression above goes to infinity as $K$ tends to infinity. Thus
\[
\lim_{K\rightarrow \infty}\bar{\mu}_K = \text{const.}\lim_{K\rightarrow\infty}\left(1 + \frac{1}{2^{\beta}} + \ldots + \frac{1}{K^{\beta}} \right)^{-1}=0.
\]
This proves the lemma.
\end{proof}

\begin{proof}[Proof of Lemma \ref{phase}]
First, by Jensen's inequality, we have
\begin{align*}
E(\mu_V) &\ge \frac{1}{1 + E(e^{-V_1}) + E(e^{-(V_1+V_2)}) +\cdots + E(e^{-(V_1+\cdots+V_{K})})}.
\end{align*}
Now, by independence, $E(e^{-(V_1+\cdots+V_i)}) = \prod_{j=1}^i (1+\theta_j)^{-1}$.
Thus,
\begin{equation}\label{lbd1}
E(\mu_V)\ge \biggl(1 + \sum_{i=1}^{K} \prod_{j=1}^i \frac{1}{1+\theta_j}\biggr)^{-1}.
\end{equation}
Now, the function $\log (1+x) - (x-x^2)$ can be easily verified to be zero at zero, and increasing on the positive half-line. Thus,
\[
\frac{1}{1+\theta_j} \le e^{-\theta_j +\theta_j^2}.
\]
It follows that
\begin{equation}\label{lbd2}
\begin{split}
E(\mu_V) &\ge \frac{1}{1 + \sum_{i=1}^K \prod_{j=1}^i e^{-\theta_j + \theta_j^2}}=\frac{1}{1+\sum_{i=1}^K  e^{-\sum_1^i\theta_j+\sum_1^i\theta_j^2}}\\
&\ge \frac{\exp\{-\sum_{j=1}^{K} \theta_j^2\}}{ 1+\sum_{i=1}^{K} \exp\{-\sum_{j=1}^i \theta_i\}} = e^{-\sigma^2} \bar{\mu}_V.
\end{split}
\end{equation} 
This proves \eqref{mulbnd}.

Next, consider the martingale 
\[
M_i = \sum_{j=1}^i (V_j - \theta_j),\quad i=1,\ldots,K.
\]
For the non-negative submartingale $\exp(M_i/4\sigma)$ and any $p > 1$, we apply Doob's $L^p$-inequality (\cite[page 54]{ry99}) to get
\begin{equation}\label{dooblp}
E\left(\max_{1\le i\le K} e^{pM_i/4\sigma}\right) \le \left(\frac{p}{p-1}\right)^p E(e^{pM_{K}/4\sigma}).
\end{equation}
Now, by definition, $\theta_i \le \sigma$ for every $i$. Thus, with $p=2$, we get
\begin{align*}
E(e^{M_{K}/2\sigma}) &= \prod_{i=1}^{K} E(e^{(V_i-\theta_i)/2\sigma})
= \prod_{i=1}^{K} \frac{e^{-\theta_i/2\sigma}}{1-\frac{\theta_i}{2\sigma}}.
\end{align*}
Now, as in the case of the upper bound, it is straightforward to verify that for any $x\in [0,1/2]$, $1-x \ge e^{-x-x^2}$. Thus, plugging in this inequality in the last expression we get
\begin{equation}\label{estmk}
E(e^{M_{K}/2\sigma}) \le e^{\sum_i \theta_i^2/4\sigma^2} = e^{1/4}. 
\end{equation}
Finally, note that for any two positive sequences $\{x_i\}$ and $\{y_i\}$ one has
\[
\frac{x_1+x_2+\ldots+x_K}{y_1+y_2+\ldots+ y_K} \le \max_{1\le i\le K}\frac{x_i}{y_i}.
\]
We use the above inequality to obtain
\begin{align}
\mu_V &= \frac{1 + e^{-\theta_1} + \cdots + e^{-(\theta_1+\cdots+\theta_{K})}}{1+ e^{-V_1} + \cdots + e^{-(V_1+\cdots+V_{K})}}\biggl(1+\sum_{i=1}^K e^{-\sum_{j=1}^i \theta_i}\biggr)^{-1}\nonumber \\
&\le \max_{1\le i\le K} e^{M_i}\cdot \bar{\mu}_V.\label{ratioseq}
\end{align}
Combining \eqref{dooblp}, \eqref{estmk}, and \eqref{ratioseq}, we get
\[
E(\mu_V^{1/2\sigma})\le 4e^{1/4} \bar{\mu}_V^{1/2\sigma},
\]
which proves \eqref{muubnd}.

Next, from \eqref{ratioseq} observe that
\begin{equation}\label{bias}
|E(\log \mu_V) - \log \bar{\mu}_V| \le E\bigl(\max_{1\le i\le K} \abs{M_i}\bigr) \le 2[E(M_K^2)]^{1/2} = 2\sigma.
\end{equation}
Now recall that the Exponential distribution satisfies the Poincar\'e inequality with Poincar\'e constant $4$ (see e.g.\ \cite{bobkov99}, p.\ 2). That is, for any function $f(V_1,\ldots,V_{K})$, 
\begin{equation}\label{varbnd}
\text{Var}(f(V_1,\ldots,V_{K})) \le 4\sum_{i=1}^{K} \theta_i^2 E\biggl(\frac{\partial f}{\partial x_i}(V_i)\biggr)^2.
\end{equation}
If we define
\[
f(x_1,x_2,\ldots,x_K)= -\log\;\left[{1+ \sum_{j=1}^{K} \exp  \left\{-\sum_{i=1}^{j} x_i\right\}}\right], 
\] 
note that $f(V_1,V_2,\ldots,V_K)=\log \mu_V$. The partial derivatives of $f$ are given by
\[
\frac{\partial f}{\partial x_i} = \frac{\sum_{j=i}^{K} \exp\left\{-\sum_{k=1}^{j} x_k\right\}}{1+ \sum_{j=1}^{K} \exp\left\{-\sum_{k=1}^{j} x_k \right\}} \in [0,1].
\]

Thus, a variance upper bound follows from \eqref{varbnd}: $\text{Var}(\log \mu_V) \le 4 \sigma^2$. Combining the last step with equation \eqref{bias}, we get
\[
E(\log \mu_V - \log \bar{\mu}_V)^2 \le 8\sigma^2.
\]
This completes the proof of the theorem. 
\end{proof}

\subsection{Proof of Theorem \ref{phasegrav}.}

\begin{proof}[Proof of Theorem \ref{phasegrav}] 
In accordance to the set-up in Lemma \ref{phase}, 
we define
\[
\lambda_i(n)  = 2\sum_{j=1}^i (\bar{\delta}(n) - \delta_i(n)),
\]
and
\[
\theta_i(n) = \frac{1}{\lambda_i(n)}, \ i = 1,\ldots,n-1.
\]
Now let $Y_1(n),\ldots,Y_{n-1}(n)$ denote the successive increments in the stationary distribution (as in Theorem \ref{mainthmdrift}). Then $Y_i(n)$ has an Exponential distribution with rate $\lambda_i(n)$. 
Let $\gamma' = (\eta /2C) \wedge \gamma$, and define
\begin{align*}
\mu_1'(n) &:= \frac{1}{1 + \sum_{i=1}^{\lfloor \gamma' n\rfloor} \exp(-\sum_{j=1}^i Y_i(n))}, \ \ \text{and}\\
\bar{\mu}_1'(n) &:= \frac{1}{1 + \sum_{i=1}^{\lfloor \gamma' n\rfloor} \exp(-\sum_{j=1}^i \theta_i(n))}.
\end{align*}
Now let 
\[
\epsilon_n = |2(\bar{\delta}(n) - \delta_1(n)) - 2\eta|,
\]
and assume that $n$ is large enough to ensure that $\epsilon_n \le \eta/2$. Since $|\delta_1(n) - \delta_i(n)|\le C(i-1)/n$, it follows that
\[
|\lambda_i(n) - 2i\eta|\le \epsilon_n i + 2C\sum_{j=1}^i \frac{i-1}{n} \le \epsilon_n i +  \frac{C i^2}{n}, \ \ i=1,\ldots,n - 1.
\]
If $i\le \gamma' n$, it follows in particular that
\begin{equation}\label{firstineq}
\lambda_i(n) \ge 2i\eta - \epsilon_n i - Ci (\eta/2C)\ge \eta i.
\end{equation}
Thus, for $n$ sufficiently large and $i\le \gamma' n$,
\begin{align}\label{mainineq}
\biggl|\theta_i(n) - \frac{1}{2i\eta} \biggr| &= \biggl|\frac{\lambda_i(n) - 2i\eta}{2i\eta \lambda_i(n)}\biggr|\le \frac{\epsilon_n}{i} + \frac{C}{2\eta^2 n}.
\end{align}
Now let
\[
\alpha_n^+ = \frac{1}{2\eta} + \epsilon_n.
\]
Then for $n$ sufficiently large and $i\le \gamma' n$,
\[
\sum_{j=1}^i \theta_j(n) \le \alpha_n^+ \sum_{j=1}^i \frac{1}{j} + \frac{Ci}{2\eta^2 n} \le \alpha_n^+ \log i + L(\eta),
\]
where $L(\eta)$ does not depend on $n$. Now, if $\eta > 1/2$ then for all sufficiently large $n$, $\alpha_n^+ < (1 + 1/2\eta)/2 < 1$. Thus,
\begin{equation}\label{bd2}
\begin{split}
\bar{\mu}'_1(n) 
&\le \frac{1}{1+  e^{-L(\eta)}\sum_{i=1}^{\lfloor \gamma'n\rfloor } i^{-\alpha_n^+}}\le \kappa^+(\eta) n^{\alpha_n^+ - 1},
\end{split}
\end{equation}
where $\kappa^+(\eta)$ is a constant free of $n$. Another application of \eqref{mainineq} shows that
\[
\bar{\mu}'_1(n) \ge \kappa^-(\eta) n^{\alpha_n^- - 1}
\]
where $\alpha_n^- = (1/2\eta) - \epsilon_n$ and $\kappa^-(\eta)$ is another constant depending only on $\eta$. Since $\lim \alpha_n^+ = \lim \alpha_n^- = 1/2\eta$, this shows that when $\eta > 1/2$, 
\[
\lim_{n \rightarrow \infty} \frac{\log \bar{\mu}_1'(n)}{\log n}  = \frac{1}{2\eta} - 1.
\]
Again, by \eqref{firstineq} it follows that
\begin{equation}\label{l2}
\limsup_{n\rightarrow\infty } \sum_{i=1}^{\lfloor\gamma'n\rfloor} \theta_i(n)^2 < \infty.
\end{equation}
We can now use Lemma \ref{phase} to conclude that
\[
\frac{\log \mu_1'(n)}{\log n}  \stackrel{P}{\rightarrow} \frac{1}{2\eta} - 1.
\]
Finally, note that
\begin{align*}
|\log \mu_1(n) - \log \mu_1'(n)| &= \log \biggl( 1 + \frac{\sum_{i=\lfloor \gamma' n\rfloor + 1}^{n-1} \exp(-\sum_{j=1}^i Y_j(n))}{1 + \sum_{i=1}^{\lfloor \gamma' n\rfloor} \exp(-\sum_{j=1}^i Y_j(n))}\biggr)\\
&\le \frac{\sum_{i=\lfloor \gamma' n\rfloor + 1}^{n-1} \exp(-\sum_{j=1}^i Y_j(n))}{1 + \sum_{i=1}^{\lfloor \gamma' n\rfloor} \exp(-\sum_{j=1}^i Y_j(n))}\\
&\le \frac{\sum_{i=\lfloor \gamma' n\rfloor + 1}^{n-1} \exp(-\sum_{j=1}^i Y_j(n))}{(\lfloor \gamma' n\rfloor + 1) \exp(-\sum_{j=1}^{\lfloor \gamma' n\rfloor}  Y_j(n))}\\
&= \frac{1}{\lfloor \gamma' n\rfloor + 1} \sum_{i=\lfloor \gamma' n\rfloor + 1}^{n-1} \exp\biggl(-\sum_{j=\lfloor \gamma' n\rfloor + 1}^i Y_j(n)\biggr)\\
&\le \frac{n - \lfloor \gamma' n\rfloor - 1}{\lfloor \gamma' n \rfloor+ 1}\le \frac{1-\gamma'}{\gamma'}.
\end{align*}
This completes the proof of the Theorem in the case $\eta > 1/2$.

Now, in the case $ \eta = 1/2$, everything up to the first inequality in \eqref{bd2} is still valid. However, the second inequality does not hold since $\lim \alpha_n^+ = 1$. Instead, we use the inequality
\[
\sum_{i=1}^{\lfloor \gamma' n\rfloor} i^{-\alpha_n^+} \ge n^{-\epsilon_n} \sum_{i=1}^{\lfloor \gamma' n\rfloor} i^{-1} \ge Kn^{-\epsilon_n} \log n
\]
and the assumption that $\epsilon_n = O(1/\log n)$ to conclude that
\[
\bar{\mu}_1'(n) \le \frac{K}{ \log n},
\]
where $K$ is some constant that does not depend on $n$. The inequality in the opposite direction follows similarly, possibly with a different constant. Inequality~\eqref{l2} continues to hold without any problem, and so does the subsequent argument. Combining, it follows that 
\[
\frac{\log \mu_1(n)}{\log \log n} \stackrel{P}{\rightarrow} -1.
\]
This completes the proof.
\end{proof}

\subsection{Proof of Theorem \ref{benefits}.}

\begin{proof}[Proof of Theorem \ref{benefits}]
The central argument in this proof is to apply continuous mapping theorem to the weak convergence result in Theorem \ref{mainthmdrift}. Let $Q_n$ be the stationary law of the market weights $\{\mu_1(n), \mu_2(n),\ldots, \mu_n(n)\}$ arranged as a decreasing sequence on the line. Then, by Theorem \ref{mainthmdrift}, $Q_n$ converges weakly to the PD($2\eta$) law as $n$ tends to infinity. Let $\{ c_i,\; i=1,2,\ldots\}$ denote a random sequence whose law is PD($2\eta$).

To prove (i), note that the function $\mathbf{x}\mapsto x_1^p$ is a continuous map in the space of sequences $(\mcal{S}', \mathbf{d}')$ (see \eqref{whatisdp}). It follows that $\mu_1^p(n)$ converges weakly to the law of $c_1^p$, and hence
\[
\begin{split}
\lim_{n\rightarrow \infty}\e \mu_1^p(n) &= \e c_1^p = \frac{1}{\Gamma(p)}\int_0^{\infty}t^{p-1}e^{-t}\psi^{-1}_{2\eta}(t)dt,\\
\psi_{2\eta}(t)&=1 + 2\eta\int_0^{1}(1-e^{-tx})x^{-2\eta-1}dx,
\end{split}
\]
which proves \eqref{moment}. The expressions for the moment of the coordinates of a PD random sequence can be found in \cite[Proposition 17]{poidir97} (for $\theta=0, \alpha=2\eta, n=1$).

Proving (ii) is similar. Again, by the continuous mapping theorem we know that for each $i$, 
\[
\lim_{n\rightarrow \infty} \e \mu_i^p(n) = \e c_i^p.
\]
Now, for any bounded function $f$ on $[0,1]$, the following identity that can be found in \cite[page 858]{poidir97}:
\[
\e \sum_{i=1}^\infty f(c_i) = \frac{1}{\Gamma(2\eta)\Gamma(1-2\eta)}\int_0^1 f(u) \frac{(1-u)^{2\eta -1}}{u^{2\eta +1}}du.
\]
If we take $f(x)=x^p$, as before, it is easy to verify that
\begin{equation}\label{cevans}
\e \sum_{i=1}^{\infty} c_i^p =  \frac{\Gamma(p-2\eta)}{\Gamma(p)\Gamma(1-2\eta)}.
\end{equation}
Now, by the dominated convergence theorem, it is clear that $\lim_n \e\sum \mu_i^p(n)$ equals the above expression if we can find a function $\psi: \mathbb{Z}_+ \rightarrow \mathbb{R}$ such that (i) $\sum_{i=1}^\infty \psi(i) < \infty$,  and (ii) for all sufficiently large $n$, $\e\mu_i^p(n) \le \psi(i)$ for all $i$.

Now, in the notation of Theorem \ref{mainthmdrift}, we have
\[
\mu_i(n) = \frac{e^{X_i(n) - X_1(n)}}{\sum_{j=1}^n e^{X_j(n) - X_1(n)}} \le e^{X_i(n) - X_1(n)} = e^{-\sum_{j=1}^{i-1} Y_j(n)},
\]
where $Y_j(n)$'s are independent Exponentials with rate $2\alpha_j(n)$ from \eqref{conal}. Now fix $p' \in (2\eta, p)$. Let $r_n = \max_{1\le i\le n} (\bar{\delta}(n) - \delta_i(n))$. Since $\lim_n r_n = 2\eta$, we can assume that $n$ is sufficiently large to ensure that $r_n \le p'$. Then
\[
\e(\mu_i^p(n)) \le \prod_{j=1}^{i-1} \frac{1}{1+p/2\alpha_j(n)}\le \prod_{j=1}^{i-1} \frac{1}{1+p/jp'} =: \psi(i).
\]
It is not difficult to see that for large $i$, $\psi(i)$ is comparable with $i^{-p/p'}$. Since $p' < p$, it follows that $\sum \psi(i) < \infty$, and this completes the proof of part (ii).

For part (iii), let us begin with the definitions
\begin{equation}\label{whatishp}
h_n(p) = \e \sum_{i=1}^n \mu_i^p(n),\quad h(p)= \e \sum_{i=1}^{\infty} c_i^p, \quad p > 2\eta. 
\end{equation}
By part (ii), $h_n(p)$ converges to $h(p)$ as $n$ tends to infinity pointwise for every $p > 2\eta$. Also, note that $\{h_n\}$ and $h$ are differentiable convex functions in $(2\eta,\infty)$. It follows that (see, e.g., \cite[Proposition 4.3.4]{canamial}) their derivatives also converge pointwise in the relative interior $(2\eta, \infty)$. That is to say, $\lim_{n\rightarrow \infty} h_n'(p) = h'(p)$. Taking derivatives inside the expectation at $p=1$ in \eqref{whatishp}, we get
\[
\lim_{n\rightarrow \infty} \e \sum_{i=1}^n \mu_i(n)\log \mu_i(n) = h'(1)=\e \sum_{i=1}^{\infty} c_i \log c_i.
\]
Now, to evaluate the last expression, we use the expression of $h$ from \eqref{cevans}:
\[
h'(p)= \frac{1}{\Gamma(1-2\eta)}\left[ \frac{\Gamma'(p-2\eta)}{\Gamma(p)} - \frac{\Gamma(p-2\eta)\Gamma'(p)}{\Gamma^2(p)}\right].
\]
Thus, 
\begin{equation}\label{limneg}
\e \sum_{i=1}^{\infty} c_i \log c_i= \frac{\Gamma'(1-2\eta)}{\Gamma(1-2\eta)} - \frac{\Gamma'(1)}{\Gamma(1)}.
\end{equation}
It is known that (see, for example \cite[6.3.2, 6.3.16, page 79]{handgamma})
\[
\frac{\Gamma'(1)}{\Gamma(1)}= -\gamma,\quad \frac{\Gamma'(1-2\eta)}{\Gamma(1-2\eta)}= - \gamma - 2\eta\sum_{k=1}^{\infty} \frac{1}{k(k-2\eta)},
\]
where $\gamma$ is the Euler constant. Plugging in these expressions in \eqref{limneg}, we get
\[
\lim_{n\rightarrow \infty}\e \left[-\sum_{i=1}^n \mu_i(n)\log \mu_i(n)\right] = 2\eta \sum_{k=1}^{\infty} \frac{1}{k(k-2\eta)},
\] 
which proves the result.
\end{proof}

\section{Counterexamples}\label{examples}

We now provide two examples where Theorem \ref{mainthmdrift} fails due to the absence of one of the two conditions \eqref{conddel2} and \eqref{conddel}.
\medskip

\noindent\textbf{Example.} Let us consider SDE \eqref{ranksde} for $n$ particles where
\[
\delta_1(n)= -1/4, \quad \delta_i(n)=0,\quad i=2,3,\ldots 
\]
This is like the Atlas model with parameter $1/4$ except that the push comes from the top. Thus
\begin{equation}\label{exmalpha}
\bar{\delta}(n)=-\frac{1}{4n}, \quad \alpha_k(n) = \sum_{i=1}^k (\bar{\delta}(n) - \delta_i(n))=-\frac{k}{4n}+\frac{1}{4}=\frac{n-k}{4n},   
\end{equation}
which is positive for $1\le k \le n-1$. Thus, the drift sequence satisfy condition \eqref{conal}. Additionally the drift sequence satisfy \eqref{conddel} for $\eta=1/4$, since
\[
\lim_{n\rightarrow\infty}\max_{1\le i \le n}(\bar{\delta}(n) - \delta_i(n))=\frac{1}{4}-\lim_{n\rightarrow\infty}\frac{1}{4n}=\frac{1}{4}\in (0,1/2).
\]
However, the drift sequence does not satisfy \eqref{conddel}.

It is easy to see that the market weights for this model do not converge to any Poisson-Dirichlet law. In fact, consider the following ratios  
\[
\frac{\mu_2(n)}{\mu_1(n)}= e^{-\xi_1(n)},\quad \frac{\mu_3(n)}{\mu_2(n)}=e^{-\xi_2(n)}
\]
where $\xi_i(n)$ is Exponential with rate $\alpha_i(n)$ for $i=1,2$. From \eqref{exmalpha}, one can verify that $\lim_{n\rightarrow\infty}\alpha_1(n)=\lim_{n\rightarrow\infty}\alpha_2(n)=1/4$. Thus, it follows that the asymptotic laws of $\mu_2(n)/\mu_1(n)$ and $\mu_3(n)/\mu_2(n)$ are the same. However, if $(V_1,V_2,V_3,\ldots)$ is a random sequence from any PD distribution, it is known that (see Proposition 8 in \cite{poidir97}) the laws of $V_2/V_1$ and $V_3/V_2$ are different Beta random variables. This negates the possibility that the market weights converge weakly to any PD law.
\medskip

\noindent\textbf{Example.} For the second example, consider any $\eta \in (0,1/2)$. Let $\beta = 4(1-\eta)$. For each $n$, define $\delta_i(n), \ i = 1,\ldots, n$ as follows: 
\[
\delta_i(n)=\begin{cases}
-\eta,& \qquad 1\le i\le \flr{n^\eta},\\
-\beta, & \qquad \flr{n^\eta} + 1 \le i\le \flr{n/2},\; \text{and}\\
- \delta_{n-i+1}(n), &\qquad i > \flr{n/2}.
\end{cases}
\]
Then, by symmetry, $\bar{\delta}(n) = 0$ and the drifts satisfy the condition \eqref{conal}. 

It is also clear that the drifts satisfy \eqref{conddel2}, but \eqref{conddel} does not hold. We shall show that $\mu_1(n) \rightarrow 0$ even though $\eta \in (0,1/2)$.

Let $Y_i(n), \ i=1,\ldots,n-1$ be the successive spacings under the stationary law as in Theorem \ref{theoremN}. Then $Y_i(n)$ is Exponential with rate
\[
\lambda_i(n) = -2\sum_{j=1}^i \delta_j(n).
\]
Note that by our construction, $\lambda_{n-i+1} = \lambda_i$. Again, since $\beta > \eta$, it follows that for $1\le i\le 2\flr{n^\eta}$ we have
\begin{equation}\label{lambda1}
\lambda_i(n) \ge 2\eta i.
\end{equation}
If $2\flr{n^\eta} < i\le \flr{n/2}$, then $i - \flr{n^{\eta}} \ge i/2$, and we get
\begin{equation}\label{lambda2}
\lambda_i(n) = 2\eta \flr{n^\eta} + 2\beta(i-\flr{n^\eta}) \ge \beta i.
\end{equation}
Now let $\theta_i(n) = 1/\lambda_i(n)$. From the above observations and the summability of $\sum_{i=1}^\infty i^{-2}$, it is clear that $\sum_{i=1}^{n-1}\theta_i^2(n)$ can be bounded by a constant that does not depend on $n$. So by Lemma \ref{phase}, we can conclude that $\mu_1(n) \rightarrow 0$ in this model provided we can show that $\bar{\mu}_1(n) \rightarrow 0$, where
\[
\bar{\mu}_1(n) = \frac{1}{1 + \sum_{i=1}^{n-1} \exp(-\sum_{j=1}^i \theta_j(n))}.
\]
Note that
\begin{equation}\label{inequ}
\begin{split}
\bar{\mu}_1(n) &\le \frac{1}{\sum_{i=2\flr{n^\eta}+1}^{\flr{n/2}} \exp(-\sum_{j=1}^i \theta_j(n))} \\
&= \frac{\exp(\sum_{j=1}^{2\flr{n^\eta}} \theta_j(n))}{\sum_{i=2\flr{n^\eta}+1}^{\flr{n/2}} \exp(-\sum_{j=2\flr{n^\eta} + 1}^i \theta_j(n))}.
\end{split}
\end{equation}
By \eqref{lambda1}, we get
\begin{equation}\label{inequ1}
\sum_{j=1}^{2\flr{n^\eta}} \theta_j(n) \le \frac{1}{2\eta}\sum_{j=1}^{2\flr{n^\eta}} \frac{1}{j} \le \frac{\log n}{2} + C,
\end{equation}
for some constant $C$ that does not depend on $n$. Again, for $2\flr{n^\eta} + 1\le i\le \flr{n/2}$, inequality \eqref{lambda2} gives
\begin{equation*}\label{inequ0}
\sum_{j=2\flr{n^\eta} + 1}^i \theta_i(n) \le \frac{1}{\beta} \sum_{j=2\flr{n^\eta} + 1}^i \frac{1}{j} \le \frac{\log(i/n^\eta)}{\beta} + C,
\end{equation*}
where $C$ is again a constant that does not vary with $n$. Thus,
\begin{equation}\label{inequ2}
\begin{split}
\sum_{i=2\flr{n^\eta}+1}^{\flr{n/2}} \exp(-{\textstyle\sum_{j=2\flr{n^\eta} + 1}^i} \theta_i(n)) &\ge e^C \sum_{i=2\flr{n^\eta}+1}^{\flr{n/2}} (i/n^\eta)^{-1/\beta} \\
&\ge C'n^{1 - \frac{1-\eta}{\beta}},
\end{split}
\end{equation}
where $C'$ is some other constant.
Combining the inequalities \eqref{inequ}, \eqref{inequ1}, and \eqref{inequ2}, and using the relation $\beta = 4(1-\eta)$, we get
\[
\bar{\mu}_1(n) \le K n^{\frac{1}{2} -1 + \frac{1-\eta}{\beta}} = K n^{-1/4}.
\]
This shows that $\mu_1(n) \rightarrow 0$ in probability. This shows that market weights do not converge weakly to any PD law inspite of condition \eqref{conddel2} holding with $\eta \in (0,1/2)$.

\bibliographystyle{amsalpha}

\begin{thebibliography}{30}

\bibitem{handgamma}
\textsc{Abramowitz, M.} and \textsc{Stegun, I.} (eds.) (1984)
\newblock {\em Pocketbook of mathematical functions.} 
Abridged edition of Handbook of mathematical functions. 
Material selected by \textsc{Michael Danos} and \textsc{Johann Rafelski}. Verlag Harri Deutsch, Thun, 1984. 


\bibitem{arguinaizenman}
\textsc{Arguin, L-P.} and \textsc{Aizenman, M.} (2007).
On the structure of quasi-staionary competing particle systems.
\textit{Preprint}-Available at \texttt{arXiv:0709.2901}. 

\bibitem{arguinPD}
\text{Arguin, L-P.} (2007)
\newblock A dynamical characterization of Poisson-Dirichlet distributions.	
\textit{Electronic Communications in Probability}, \textbf{12} 283--290. 

\bibitem{arratia83}
\textsc{Arratia, R.} (1983)
\newblock The motion of a tagged particle in the simple symmetric exclusion system on $Z$.
\newblock {\em Ann. Probab.}, \textbf{11}(2) 362--373.

\bibitem{arratia85}
\textsc{Arratia, R.} (1985)
\newblock Symmetric exclusion processes: a comparison inequality and a large deviation result.
\newblock {\em Ann. Probab.}, \textbf{13}(1) 53--61.

\bibitem{atlasmodel}
\textsc{Banner, A.} and \textsc{Fernholz, R.} and \textsc{Karatzas, I.} (2005).
Atlas models of equity markets.
\textit{Ann. Appl. Probab.}, \textbf{15}(4) 2296--2330.

\bibitem{barysh01}
\textsc{Baryshnikov, Y} (2001)
\newblock G{U}{E}s and queues.
\newblock {\em Probab. Theory Relat. Fields}, \textbf{119}(2) 256--274.

\bibitem{bobkov99}
\textsc{Bobkov, S. G.} (1999)
\newblock Isoperimetric and analytic inequalities for log-concave probability measures. 
\textit{Ann. Probab.}, \textbf{27}(4) 1903--1921.

\bibitem{bolguivil}
\textsc{Bolley F.}, \textsc{Guillin, A.}, and \textsc{Villani, C.} (2007)
\newblock Quantitative concentration inequalities for empirical measures on non-compact spaces.
To appear in \newblock {\em Probab. Theory Relat. Fields}.

\bibitem{bostal}
\textsc{Bossy, M.} and \textsc{Talay, D.} (1997)
\newblock A stochastic particle method for the Mckean-Vlasov and the Burgers equation.
\newblock {\em Mathematics of Computation}, \textbf{66}(217) 157--192.

\bibitem{calpul}
\textsc{Calderoni, P.} and \textsc{Pulvirenti, M.} (1983).
Propagation of Chaos for Burgers' Equation.
\textit{Annales de l'I.H.P.}, section A, tome \textbf{39}(1) 85--97. 

\bibitem{catgmal}
\textsc{Cattiaux, P.}, \textsc{Guillin, A.}, and \textsc{Malrieu, F.} (2007).
Probabilistic Approach for Granular Media Equations in the Non Uniformly Convex Case.
To appear in \textit{Probability Theory and Related Fields}.


\bibitem{fern02}
	\textsc{Fernholz, R.} (2002).
	\newblock {\em Stochastic Portfolio Theory}.
	\newblock Springer, New York.

\bibitem{karsurv}
\textsc{Fernholz, R.} and \textsc{Karatzas, I.} (2007).
\newblock A survey of stochastic portfolio theory.
\newblock To appear.
	
\bibitem{harris65}
\textsc{Harris, T. E.} (1965)
\newblock Diffusion with ``collisions'' between particles.
\newblock {\em J. Appl. Probability}, \textbf{2} 323--338.
	
	
\bibitem{harrison73}
\textsc{Harrison, J. M.} (1973)
\newblock The heavy traffic approximation for single server queues in series.
\newblock {\em J. Appl. Probability}, \textbf{10} 613--629.

\bibitem{harrison00}
\textsc{Harrison, J. M.} (2000)
\newblock Brownian models of open processing networks: canonical representation
  of workload.
\newblock {\em Ann. Appl. Probab.}, \textbf{10}(1) 75--103.

\bibitem{harvan97}
\textsc{Harrison, J. M.} and \textsc{Van~Mieghem, J. A.} (1997)
\newblock Dynamic control of {B}rownian networks: state space collapse and
  equivalent workload formulations.
\newblock {\em Ann. Appl. Probab.}, \textbf{7}(3) 747--771.

\bibitem{harwil87s}
\textsc{Harrison, J. M.} and \textsc{Williams, R. J.} (1987)
\newblock Brownian models of open queueing networks with homogeneous customer
  populations.
\newblock {\em Stochastics}, \textbf{22}(2) 77--115.

\bibitem{harrison78}
\textsc{Harrison, J. M.} (1978)
\newblock The diffusion approximation for tandem queues in heavy traffic.
\newblock {\em Adv. in Appl. Probab.}, \textbf{10}(4) 886--905.

	
	
\bibitem{hashemi00}
\textsc{Hashemi, F.} (2000).
\newblock An evolutionary model of the size distribution of firms.
\newblock {\em Journal of Evolutionary Economics}, \textbf{10} 507--521.


\bibitem{canamial}
\textsc{Hiriat-Urruty, J.} and \textsc{Lemar\'echal, C.} (1993).
\newblock {\em Convex Analysis and Minimization Algorithms I}. A Series of Comprehensive Studies in Mathematics \textbf{305}. Springer-Verlag.


\bibitem{ijirisimon74}
\textsc{Ijiri, Y.} and \textsc{Simon, H.} (1977).
\newblock Interpretations of departures from the Pareto curve firm-size distributions.
\newblock {\em Journal of Political Economy}, {\bf 82} 315--331.

\bibitem{jourdain00}
\textsc{Jourdain, B.} (2000).
\newblock Diffusion processes associated with nonlinear evolution equations for signed measures.
\newblock {\em Methodol. Comput. Appl. Probab.} \textbf{2}(1) 69--91.

\bibitem{joumal}
\textsc{Jourdain, B.} and \textsc{Malrieu, F.} (2008).
\newblock Propagation of chaos and Poincar\'{e} inequalities for a system of particles interacting through their cdf.
\newblock \textit{Annals of Applied Probability} \textbf{18} (5) 1706--1736. 

\bibitem{jovanovic82}
\textsc{Jovanovic, B.} (1982).
\newblock Selection and the evolution of industry.
\newblock {\em Econometrica}, \textbf{50} 649--670.

\bibitem{malrieu01}
\textsc{Malrieu, F.} (2001).
\newblock Logarithmic Sobolev inequalities for some nonlinear PDE's.
\newblock {\em Stochastic Processes and their Applications}, {\bf 95}(1) 109--132.

\bibitem{palpitman}
	\textsc{Pal, S.} and \textsc{Pitman, J.} (2007).
	\newblock  One-dimensional Brownian particle systems with rank dependent drifts.
	\newblock {To appear. Available at {\tt http://arxiv.org/abs/0704.0957.}}
	
\bibitem{poidir97}	
	\textsc{Pitman, J.} and \textsc{Yor, M.} (1997).
	\newblock The two parameter Poisson-Dirichlet distribution derived from a stable subordinator.
	\newblock {\em The Annals of Probability}, \textbf{25}(2) 855--900.
	
		
\bibitem{resnick}
\textsc{Resnick, S.} (2007).
\newblock {\em Heavy-Tail Phenomena, Probabilistic and Statistical Modeling.} Springer Series in Operations Research and Financial Engineering. Springer.		
		
		
\bibitem{ry99}
\textsc{Revuz, D.} and \textsc{Yor, M.} (1999).
\newblock {\em Continuous martingales and {B}rownian motion}, 3rd ed.
\newblock Springer, Berlin-Heidelberg.

\bibitem{ruzaizenman}
\textsc{Ruzmaikina, A.} and \textsc{Aizenman, M.} (2005).
Characterization of invariant measures at the leading edge for competing particle systems.
\textit{The Annals of Pobability}, \textbf{33} (1), 82--113.



\bibitem{simonbonini58}
\textsc{Simon, H.} and \textsc{Bonini, C.} (1955).
\newblock {\em The size distribution of business firms.}
\newblock {\em American Economic Review}, \textbf{48}, 607--617.


\bibitem{sznitman86}
\textsc{Sznitman, A. S.} (1986).
\newblock A propagation of chaos result for {B}urgers' equation.
\newblock {\em Probab. Theory Relat. Fields}, \textbf{71}(4), 581--613.

\bibitem{sznitman91}
\textsc{Sznitman, A. S.} (1991)
\newblock Topics in propagation of chaos.
\newblock In {\em \'Ecole d'\'Et\'e de Probabilit\'es de Saint-Flour
  XIX---1989}, pages 165--251. Springer, Berlin.



\bibitem{sping}
\textsc{Talagrand, M.} (2003).
\newblock{\em Spin Glasses: A Challenge for Mathematicians: Cavity and Mean Field Models}, 1st ed.
\newblock Springer: A Series of Modern Surveys in Mathematics.


\bibitem{varwilliams}
\textsc{Varadhan, S. R. S.} and \textsc{Williams, R. J.} (1985)
\newblock Brownian motion in a wedge with oblique reflection.
\newblock {\em Comm. Pure Appl. Math.}, \textbf{38} 405--443.

\bibitem{williams87r}
\textsc{Williams, R. J.} (1987)
\newblock Reflected {B}rownian motion with skew symmetric data in a polyhedral
  domain.
\newblock {\em Probab. Theory Related Fields}, \textbf{75}(4):459--485.



\end{thebibliography}

\end{document}